\numberwithin{equation}{section}
\newtheorem{theorem}{Theorem}[section]
\newtheorem{lemma}[theorem]{Lemma}
\newtheorem{corollary}[theorem]{Corollary}
\newtheorem*{theorem*}{Theorem}
\newtheorem*{proposition*}{Proposition}
\theoremstyle{definition}
\newtheorem{definition}[theorem]{Definition}
\newtheorem{remark}[theorem]{Remark}
\newcommand{\R}{{\mathbb R}}
\newcommand{\Z}{{\mathbb Z}}
\newcommand{\sbf}{{s}}
\newcommand{\bbf}{{b}}
\newcommand{\ds}{{\displaystyle}}
\newcommand{\TS}{{\mkern-1mu\times\mkern-1mu}}
\newcommand{\BA}{{\widehat{\beta}}}
\newcommand{\Try}{{\triangleleft}}
\date{\vspace{-9ex}}
\title{Nested critical points for a directed polymer on a disordered diamond lattice  \vspace{.4cm}}
\date{  }
  \author{\textbf{Tom Alberts}\footnote{Department of Mathematics,
 University of Utah: {\tt   alberts@math.utah.edu}}  \hspace{.3cm} \& \hspace{.3cm} \textbf{Jeremy Clark}\footnote{Department of Mathematics,  University of Mississippi: {\tt
jeremy@olemiss.edu}} \vspace{.5cm} }
\begin{document}
\maketitle

\begin{abstract} We consider a model for a directed polymer in a random environment defined on a hierarchical diamond lattice in which i.i.d.\ random variables are attached to the lattice bonds.  Our focus is on scaling schemes in  which a size parameter $n$, counting the number of hierarchical layers of the system,  becomes large as the inverse temperature $\beta$ vanishes. When $\beta$ has the form $\BA/\sqrt{n}$ for a parameter $\BA>0$, we show that there is a cutoff value $0 < \kappa < \infty$ such that as $n \to \infty$ the variance of the normalized partition function tends to zero for $\BA\leq \kappa $ and grows without bound  for $\BA > \kappa $. We obtain a more refined description of the border between these two regimes by setting the inverse temperature to $\kappa/\sqrt{n} + \alpha_n$ where $0 < \alpha_n \ll 1/\sqrt{n}$ and analyzing the asymptotic behavior of the variance. We show that when $\alpha_n = \alpha (\log n-\log \log n)/n^{3/2}$ (with a small modification to deal with non-zero third moment) there is a similar cutoff value $\eta$ for the parameter $\alpha$ such that when $\alpha < \eta$ the variance goes to zero and grows without bound when $\alpha > \eta$. Extending the analysis yet again by probing around the inverse temperature $\kappa/\sqrt{n} + \eta (\log n-\log \log n)/n^{3/2}$ we find an infinite sequence of nested critical points for the variance behavior of the normalized partition function. In the subcritical cases $\BA \leq \kappa$ and $\alpha \leq \eta$ this analysis is extended to a central limit theorem result for the fluctuations of the normalized partition function.
\end{abstract}

\section{Introduction}

\subsection{Preliminary discussion}

A \textit{directed polymer in a  random environment} is a statistical mechanical model for a random, self-avoiding path (the polymer) whose law is skewed by a second layer of randomness identified with ``impurities" in a surrounding medium (the environment).  In these models, the environmental  impurities are defined by the realizations of i.i.d.\  random variables spread out over the spatial lattice traversed by the polymer, and the fundamental question is to what extent realizations of the  environment  determine the polymer's trajectory.     The degree of environmental  influence  on the polymer's law  is  modulated in a direct way by an inverse temperature parameter $\beta\geq 0$ (defining a Gibbs measure) and filtered indirectly through the  path combinatorics of the lattice structure on which the polymer is built.   From a mathematical perspective, directed polymers in random media provide interesting probabilistic models since they can exhibit critical behavior as a function of the temperature in the limit that the polymer becomes long.   \vspace{.2cm}

The most  studied  spatial setting for directed polymers is the rectangular lattice $\Z^{+}\times \Z^{d}$ in which the  component $\Z^{+}$ formally plays the role of ``time" for a stochastic process taking values in $\Z^d$; see, for instance,~\cite{Imbr,Bolth,Carmona,Vargas,Sepp,alberts} and the review~\cite{Comets}.  Recently a few authors~\cite{lacoin,US} have focused  on directed polymers embedded on~\textit{diamond graphs} (also referred to as  \textit{diamond lattices}), which are a family of   recursively defined graphs  providing a network of directed paths between two root vertices, $A$ and $B$  (find a  diagram  and the precise definition below).   Before the topic of directed polymers in random media had begun to fully develop as a topic in probability, directed polymers on diamond graphs  were conceived of  in physics literature~\cite{Derrida,Gardner,Cook} as a somewhat simplified but still challenging  alternative setting for developing an understanding of random polymer models.   The simplifying feature of the diamond graph models in comparison to the rectangular lattice is that the distributions of some statistical mechanical quantities, such as the partition function,  obey recursive relations as a function of the system size (this will be clear from~(\ref{Induct}) below).   Other closely related  statistical mechanical phenomena  studied in the context of  diamond graphs include:  spin systems~\cite{Griffiths}, percolation~\cite{HamblyII}, conductance models~\cite{Spohn,Wehr,Hambly},  and especially  pinning models~\cite{Hakim,Giacomin,Garel,GLT,lacoin3}.


\subsection{The model and main results}\label{SecMain}

Given parameter values $b\in \mathbb{N}$ and   $\sbf \in \mathbb{N}$, the diamond graphs $D_{n}$ are constructed for   integers $n\geq 0$ through the following iterative recipe:
\begin{itemize}
\item $D_{0}$  is the graph consisting of a single edge connecting  ``root" vertices $A$ and $B$.

\item  The $n^{th}$ diamond graph, $D_{n}$, is constructed by replacing each edge on $D_{n-1}$ by a ``diamond" formed by
 $\bbf $ branches that are each split into $\sbf $ segments.
\end{itemize}

\begin{center}
\includegraphics[scale=.5]{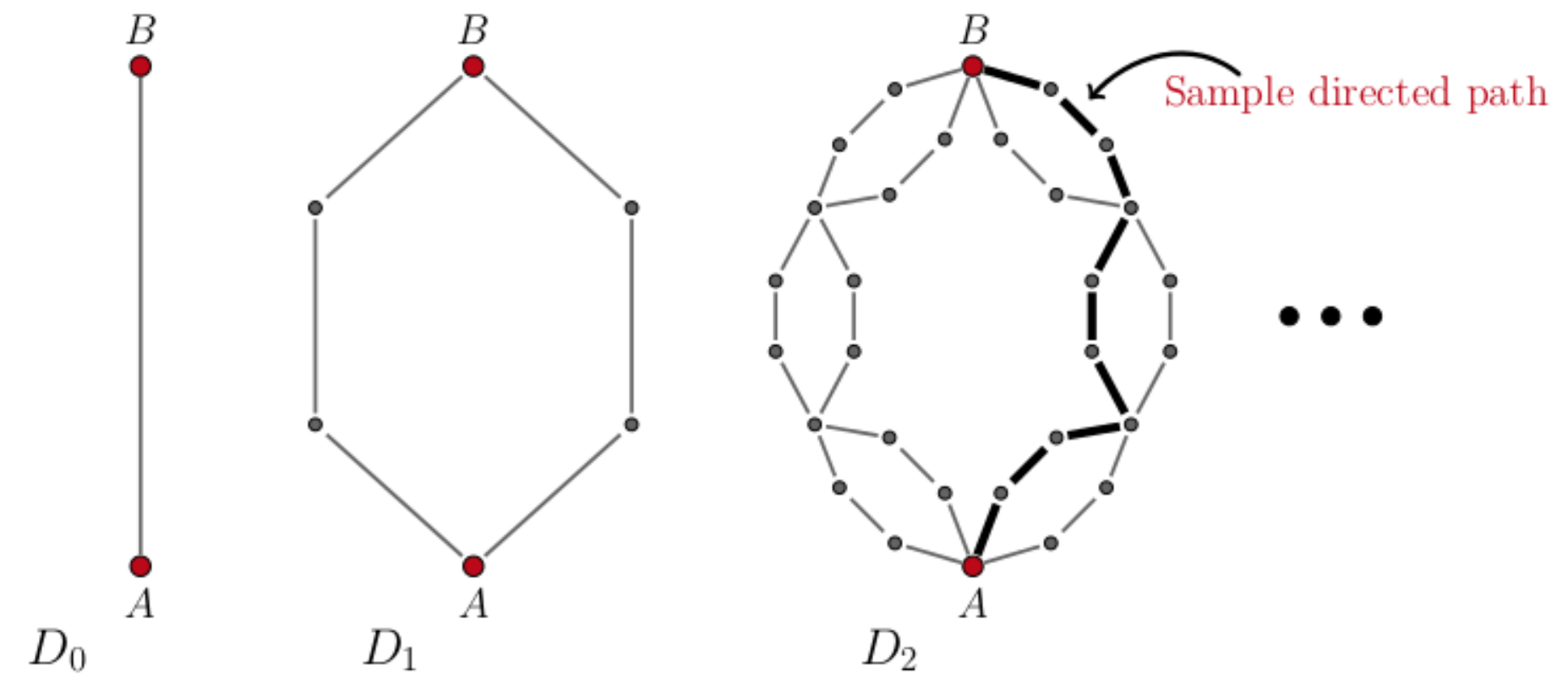}\\
\small The first three diamond graphs determined by $\bbf=2$  and $\sbf=3$.
\end{center}
We denote the set of edges on $D_{n}$ by $E_{n}$.  The $n^{th}$ diamond graph defines a discrete set of directed paths between the root vertices $A$ and $B$, which we denote by $\Gamma_{n}$.   The term ``directed" means that the paths move progressively towards the destination $B$ without self-intersections.  There are $(bs)^{n}$ edges and $ b^{\frac{s^{n}-1}{s-1}  }$ directed paths on $D_{n}$.  This article will be devoted exclusively to the case $b=s$ since the cases $b>s$ and $s<b$ behave differently and require other forms of analysis.

Let  $\omega_{a}$ be i.i.d.\ random variables labeled by the edge set, $E_{n}$,   having mean zero, variance one, and  finite exponential moments: $  \mathbb{E}\big[ e^{\beta \omega_{a}  }  \big] \, < \, \infty\,   $.
  The  partition function $Z_{n}(\beta)$ is defined by
$$Z_{n}(\beta) \, := \,\frac{1}{|\Gamma_{n}|  }\sum_{p\in \Gamma_{n}  }e^{\beta  H_{n}^{\omega}(p) } \hspace{1.2cm}\text{for}\hspace{1.2cm}H_{n}^{\omega}(p) \, :=  \, \sum_{a\Try p} \omega_{a} \, ,
     $$
where   the notation $a\Try p$  means that the edge $a\in E_{n}$ lies along a path $p\in \Gamma_{n}$.  The partition function normalizes random probability measures $\mu^{(\omega)}_{\beta, n}$ on the path space $p\in \Gamma_{n}$ through the Gibbs density:
$$\mu^{(\omega)}_{\beta, n}\big(p) \,   = \, \frac{  e^{\beta  H_{n}^{\omega}(p) }    }{  Z_{n}(\beta)   } \,   .   $$
The above  differs from the partition function in~\cite{lacoin,US}, where the disorder variables $\omega_{a} $ are placed at the vertices of the graph rather than the edges. Disorder for directed polymer models is commonly discussed through the normalized partition function:
\begin{align}\label{ExpW}
W_{n}(\beta)\, := \, \frac{  Z_{n}(\beta)  }{  \mathbb{E}\big[  Z_{n}(\beta)   \big] }\, ,
\end{align}
which can be written as a sum of products
\begin{align}\label{ExpWII}
W_{n}(\beta)\, := \, \frac{1}{|\Gamma_{n}|  }\sum_{p\in \Gamma_{n}  }\prod_{a\Try p} \mathbf{E}(\beta; a   )\hspace{.8cm}\text{for}\hspace{.8cm}\mathbf{E}(\beta; a   ):= \frac{e^{\beta  \omega_{a}  }  }{ \mathbb{E}[e^{\beta  \omega_{a}  }]  }\,.
\end{align}
The inductive construction of the diamond graphs $D_{n}$ implies that there is a recursive relation between the distributions of $W_{n}(\beta)$ and $W_{n+1}(\beta)$;  if $W_{n}^{(i,j)}(\beta)$  are independent copies of  $W_{n}(\beta)$, then there is the following equality in distribution
\begin{align}\label{Induct}
W_{n+1}(\beta)  \, \stackrel{ d }{ =}  \,  \frac{1}{b}\sum_{i=1}^{b} \prod_{j=1}^{b}  W_{n}^{(i,j)}(\beta)  \, .
\end{align}

The two theorems below make analogous statements to those found in Theorem~2.4 of~\cite{US}, concerning the same hierarchical diamond lattice model except with disorder placed on sites. The first theorem characterizes the large $n$ behavior of the variance of the random variables $W_{n}\big(\widehat{\beta} /\sqrt{n}\big)$ where $\BA$ is a positive parameter.  The consideration of limits in which the size $n$ of the system  and the inverse temperature $\beta$ are simultaneously scaled is characteristic of studies of the \textit{intermediate disorder regime}~\cite{alberts,Alberts_Ortgiese, Flores,US,Carav2,DeyZyg}, which effectively magnifies the system behavior in a shrinking (with $n\gg 1$) region of $\beta$ around  the critical point $\beta_{c}$ at which the system transitions from weak disorder to strong disorder. As we will explain later it is not clear that the scaling above is truly accessing the intermediate disorder regime for the edge model, but nonetheless it produces interesting results.

\begin{theorem}[Critical point]\label{ThmOld}
Define the  cut-off value \[\kappa_{b}:= \left( \frac{2}{b-1}  \right)^{1/2}.\]  As $n\rightarrow\infty $, we have the following $\widehat{\beta}$-dependent behavior in the variance of $W_{n}\big( \widehat{\beta}/\sqrt{n} \big) $:
\begin{align}\label{Inf}
\text{}\hspace{1cm}\textup{Var}\bigg(W_{n}\Big(\frac{\widehat{\beta}}{\sqrt{n}}\Big) \bigg) \quad  \xrightarrow{n \to \infty}  \quad \begin{cases} 0\,, & \quad   0\leq \widehat{\beta}  \leq   \kappa_{b} \, ,  \vspace{.2cm}    \\  \infty\,,  & \quad   \,\,\,\,\,\,\widehat{\beta} > \kappa_{b} \, . \end{cases}  
\end{align}
\end{theorem}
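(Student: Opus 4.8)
The plan is to analyze the variance via the recursive distributional identity~(\ref{Induct}). Let me set up the key quantity: since $W_n$ has mean one, $\textup{Var}(W_n) = \E[W_n^2] - 1$. Writing $V_n := \textup{Var}(W_n(\beta))$, I would apply~(\ref{Induct}) to get a recursion for $\E[W_{n+1}^2]$ in terms of $\E[W_n^2]$. Squaring the right-hand side of~(\ref{Induct}) and taking expectations, using independence of the $W_n^{(i,j)}$, gives
\begin{align*}
\E\big[W_{n+1}^2\big] \, = \, \frac{1}{b^2}\bigg( b\,\E\big[W_n^2\big]^b + b(b-1)\, \big(\E[W_n^2]^{?}\big) \bigg),
\end{align*}
but more carefully: the diagonal terms $i=i'$ contribute $b \cdot \E[\prod_j W_n^{(i,j)}]^2$-type expressions — actually $\E[(\prod_{j=1}^b W_n^{(i,j)})^2] = \E[W_n^2]^b$ — while off-diagonal terms $i \neq i'$ contribute $\E[\prod_j W_n^{(i,j)}]\E[\prod_j W_n^{(i',j)}] = 1$. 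Hence, with $m_n := \E[W_n(\beta)^2]$,
\begin{align*}
m_{n+1} \, = \, \frac{1}{b^2}\Big( b\, m_n^{\,b} + b(b-1)\Big) \, = \, \frac{1}{b}\, m_n^{\,b} + \frac{b-1}{b}.
\end{align*}
In terms of the variance $V_n = m_n - 1$ this becomes the clean recursion $V_{n+1} = \frac{1}{b}\big((1+V_n)^b - 1\big)$. The base case is $V_0 = \textup{Var}(W_0(\beta)) = \textup{Var}(\mathbf{E}(\beta;a)) =: v(\beta)$, which for $\beta = \BA/\sqrt{n}$ behaves like $v(\BA/\sqrt n) = \beta^2 + O(\beta^3) = \BA^2/n + O(n^{-3/2})$ as $n\to\infty$ (since the variance of $e^{\beta\omega}/\E[e^{\beta\omega}]$ is $e^{\beta^2\sigma^2}-1$ in the Gaussian case and has the same leading order in general).

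So the problem reduces to a deterministic dynamical-systems question: iterate the map $f(x) = \frac{1}{b}\big((1+x)^b - 1\big)$ exactly $n$ times starting from $x_0 = \BA^2/n + O(n^{-3/2})$, and determine the limit of $f^{\circ n}(x_0)$ as $n\to\infty$. I would first record the elementary facts about $f$: it fixes $0$, it is smooth, increasing and convex on $[0,\infty)$, with $f'(0) = 1$ and $f''(0) = b-1 > 0$, so near zero $f(x) = x + \frac{b-1}{2}x^2 + O(x^3)$. The fixed point $0$ is thus parabolic (neutral) from the right. The heuristic is that iterating a map of the form $x \mapsto x + c x^2 + \cdots$ from a starting point of size $\sim t/n$ for $n$ steps lands near the fixed point of the associated flow $\dot{u} = c u^2$; but that ODE blows up in finite time, so the relevant comparison is with the discrete orbit. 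The standard trick is to pass to the reciprocal variable $y_n = 1/x_n$: if $x_{n+1} = x_n + c x_n^2 + O(x_n^3)$ then $y_{n+1} = y_n - c + O(1/y_n)$, so $y_n \approx y_0 - cn$. Starting from $y_0 = n/\BA^2 + O(1)$ and subtracting $c n = \frac{b-1}{2}n$ over $n$ steps gives $y_n \approx n\big(\frac{1}{\BA^2} - \frac{b-1}{2}\big) + o(n)$. This is positive and of order $n$ (so $x_n \to 0$) precisely when $\frac{1}{\BA^2} > \frac{b-1}{2}$, i.e. $\BA < \kappa_b = (2/(b-1))^{1/2}$; it is negative when $\BA > \kappa_b$, which forces the orbit to escape and $x_n$ to grow without bound; and at $\BA = \kappa_b$ the leading terms cancel and one needs the next-order correction, which I expect still pushes $y_n \to +\infty$ (more slowly, like $\sqrt n$ or $\log n$), giving $V_n \to 0$ — matching the ``$\leq$'' in the theorem. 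I would make the reciprocal estimate rigorous by proving two-sided bounds: for the subcritical regime, show by induction that $x_n \le C/(n-k)$ type bounds hold as long as the orbit stays small, controlling the $O(x^3)$ error terms since they are summable against $1/y$; for the supercritical regime, show the orbit increases past any fixed threshold $\delta>0$ in $o(n)$ steps (using that below $\delta$ we have $f(x) \ge x + c' x^2$ with $c' = c - O(\delta)$ still beating the linear term over the remaining steps), after which convexity and $f(\delta) > \delta$ force divergence to $+\infty$.

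The main obstacle is the careful bookkeeping of error terms in the near-parabolic iteration, especially at and just below the critical value $\BA = \kappa_b$, where the leading-order cancellation means the limit is governed by genuinely lower-order effects and the simple reciprocal substitution must be refined (one typically needs an expansion $y_n = An + B\log n + \cdots$ and must track how the $O(x_n^3)$ terms and the $O(n^{-3/2})$ correction in $x_0$ feed into the $\log n$ coefficient). A secondary subtlety is that $x_0$ depends on $n$, so this is not a fixed dynamical system but a triangular array of orbits; uniformity of the error estimates in the relevant range of starting points is what makes the argument go through. For the bare statement of Theorem~\ref{ThmOld} as written — convergence to $0$ versus $\infty$ with the cutoff $\kappa_b$, and $0$ at the endpoint — the crude reciprocal bounds suffice and the delicate expansion is only needed for the finer results announced in the abstract; so I would present the clean recursion $V_{n+1} = \frac1b((1+V_n)^b-1)$, the reciprocal-variable heuristic, and then the two one-sided comparison lemmas, deferring the sharp endpoint analysis to the later sections.
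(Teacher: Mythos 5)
Your reduction is the same as the paper's: squaring \eqref{Induct} gives $\textup{Var}(W_{k+1})=M(\textup{Var}(W_k))$ with $M(x)=\frac1b[(1+x)^b-1]$, and the paper (which leaves the formal proof of Theorem~\ref{ThmOld} to the reader, the estimates being in Section~\ref{SecWeakDisorder}) analyzes exactly the iteration $M^{n}(\varrho_0(\beta_n))$ through a variable equivalent to your reciprocal $y_j$ (its $r_j^{(N)}=1-\kappa_b^2/(N\widetilde M^j(X_N))$ in Lemma~\ref{LemInductionStep}). Your subcritical argument and your supercritical escape-plus-geometric-growth argument are sound and mirror Lemma~\ref{LemInductionStep}(ii) and the doubling argument of Section~\ref{SecExplode}; one minor slip is that for $\widehat{\beta}>\kappa_b$ the orbit needs about $(\kappa_b^2/\widehat{\beta}^2)\,n$ steps, a positive fraction of $n$ rather than $o(n)$, to reach a fixed threshold $\delta$, which is harmless since a positive fraction of the $n$ iterations still remains afterwards.

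The genuine gap is the endpoint $\widehat{\beta}=\kappa_b$, where your claim that ``the crude reciprocal bounds suffice'' is not correct. At criticality the linear drift cancels $y_0$ exactly, and the outcome is decided by precisely the terms you defer: the $O(n^{-3/2})$ part of $\varrho_0$ and the accumulated drift $\sum_j x_j=O(\log n)$. Indeed $\varrho_0(\kappa_b/\sqrt n)=\kappa_b^2/n+\tau\kappa_b^3 n^{-3/2}+O(n^{-2})$, so the third-moment term shifts $y_0$ by $-\tau\kappa_b\sqrt n$, i.e.\ it is equivalent to performing roughly $\tau\kappa_b\sqrt n$ additional iterations of $M$, whereas the total margin available at criticality is only of order $\eta_b(\log n-\log\log n)$ iterations (this is the content of Lemma~\ref{LemInductionStep}(i) and Theorem~\ref{ThmMain}). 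Consequently your expectation that the next-order correction ``still pushes $y_n\to+\infty$'' is not merely unproven but false in general: for $\tau>0$, monotonicity of $\beta\mapsto\varrho_0(\beta)$ and of $M$, combined with Theorem~\ref{ThmMain}(i) for $\epsilon>\eta_b$, forces $\textup{Var}(W_n(\kappa_b/\sqrt n))\to\infty$. The endpoint conclusion is available only when the third moment does not push the initial variance above $\kappa_b^2/n$ by more than the logarithmic window (e.g.\ $\tau\le 0$, in particular Gaussian or symmetric disorder) or after the skew correction $-\tau\kappa_b^2/(2n)$ used in \eqref{WConvII}; in any of these cases it requires the refined second-order expansion of Lemma~\ref{LemInductionStep}(i) (tracking the $\eta_b(\log n-\log\log n)$ budget), not the crude reciprocal bounds, so this part of your plan needs to be replaced rather than merely tightened.
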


Thus the values of the random variables $ W_{n}\big(\widehat{\beta}/ \sqrt{n}\big) $ are concentrated near $1$ when  $\widehat{\beta}  \leq  \kappa_{b}$.   The following theorem characterizes the fluctuations away from $1$ for large $n$.

\begin{theorem}[Central limit theorems]\label{ThmOldII}
  When  $\widehat{\beta}  <  \kappa_{b}$  we have the following weak convergence:
\begin{align}\label{WConv}
 \sqrt{n}\bigg(W_{n}\Big(\frac{\widehat{\beta}}{\sqrt{n}}\Big) -1\bigg)    \quad  \stackrel{\mathcal{L}}{\Longrightarrow} \quad  \mathcal{N}\left( 0,\, \left( \widehat{\beta}^{-2} - \kappa_{b}^{-2} \right)^{-1} \right) \,.
\end{align}
At the critical value $\widehat{\beta}=\kappa_{b}$, the limit result becomes:
\begin{align}\label{WConvII}
  \sqrt{\log n}\left(W_{n}\Big(\frac{\kappa_{b} }{\sqrt{n}}-\frac{ \tau\kappa_{b}^2 }{2n} \Big) -1\right)    \quad  \stackrel{\mathcal{L}}{\Longrightarrow} \quad  \mathcal{N}\Big( 0,\, \frac{6}{b+1}\Big) \,,
\end{align}
where $\tau$ is the third moment of $\omega$.
\end{theorem}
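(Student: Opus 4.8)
The plan is to reduce everything to the recursive relation~(\ref{Induct}) and track the first three moments (or rather the variance and the third centered moment) of $W_n$ through the recursion. Write $\beta = \beta_n$ for whatever scaling is under consideration, set $V_n := \textup{Var}\big(W_n(\beta_n)\big)$, and note $\mathbb{E}[W_n]=1$ identically. Applying~(\ref{Induct}) and expanding the product $\prod_{j=1}^b W_n^{(i,j)}$ to second order, using independence across the $b^2$ copies, gives a closed recursion of the schematic form $V_{n+1} = V_n + (b-1)V_n^2 + (\text{higher-order in }V_n)$, where the quadratic coefficient $b-1$ is exactly what produces the cutoff $\kappa_b = (2/(b-1))^{1/2}$ in Theorem~\ref{ThmOld}: if $V_n \approx c/n$ then the fixed-point balance $c = (b-1)c^2$ combined with the linear drift forces $c = 1/(b-1)$ times a correction, and matching this against the single-step contribution of a variance-one disorder variable (each factor $\mathbf{E}(\beta;a)$ has variance $\sim \beta^2$) pins the constant. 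For the first limit~(\ref{WConv}), with $\widehat\beta<\kappa_b$, I would first establish the sharp asymptotic $V_n \sim \big(\widehat\beta^{-2}-\kappa_b^{-2}\big)^{-1}/n$ by analyzing the deterministic recursion for $n V_n$ (it converges to the stated constant because the map $x \mapsto x + (b-1)x^2/n$-type iteration, properly rescaled, has the advertised limit), and then prove asymptotic normality of $\sqrt n\,(W_n-1)$.

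For the central limit theorem itself the natural tool is a martingale or telescoping decomposition adapted to the hierarchical structure. Because~(\ref{Induct}) builds $W_{n+1}$ from independent copies of $W_n$, one can write $W_n(\beta)-1$ as a sum over the edges of $D_n$ (or over hierarchical ``levels'') of increasingly small independent-ish increments, and the linearization $\prod_j W_n^{(i,j)} \approx 1 + \sum_j (W_n^{(i,j)}-1)$ shows that to leading order $\sqrt n(W_{n+1}-1)$ is a normalized average of $b$ terms each of which is itself close to $\sqrt n(W_n-1)$ plus a genuinely new fluctuation of size $\sqrt{\beta^2} = O(1/\sqrt n)$. I would make this precise by a Lindeberg-type CLT for the triangular array of level-increments, checking the variance sums converge (this is exactly the content of the $V_n \sim c/n$ estimate) and that a Lyapunov condition holds (controlled by the third-moment recursion, which one shows stays $o(V_n^{3/2}\cdot n^{1/2})$ or similar). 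An alternative, possibly cleaner, route is the method of moments: show that every centered moment $\mathbb{E}\big[(\sqrt n(W_n-1))^k\big]$ converges to the corresponding Gaussian moment, again by setting up recursions for the joint moments via~(\ref{Induct}); the combinatorics of which cross-terms survive the $n\to\infty$ limit is governed by the same $(b-1)$-balance.

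At the critical value $\widehat\beta=\kappa_b$, the quadratic term in the variance recursion is no longer beaten by the linear drift, so $V_n$ decays like $1/\log n$ rather than $1/n$ — this is why the normalization in~(\ref{WConvII}) is $\sqrt{\log n}$ rather than $\sqrt n$. The subtraction $\tau\kappa_b^2/(2n)$ inside the temperature is a second-order correction to the inverse temperature that compensates for the non-Gaussianity of $\omega$: expanding $\mathbf{E}(\beta;a) = e^{\beta\omega_a}/\mathbb{E}[e^{\beta\omega_a}]$ one finds its variance is $\beta^2 + \beta^3\tau + O(\beta^4)$, and to get the clean constant $6/(b+1)$ one must cancel the $\beta^3\tau$ term, which is precisely what shifting $\beta$ by $-\tau\kappa_b^2/(2n)$ accomplishes at the relevant order. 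I expect the main obstacle to be the critical case: establishing $\log n \cdot V_n \to 6/(b+1)$ requires a delicate analysis of the recursion $V_{n+1} = V_n + (b-1)V_n^2 + O(V_n^3) + (\text{$\beta$-dependent source terms})$ near its marginally unstable fixed point at $0$, keeping track of the $1/n$ corrections to the coefficients coming from the precise value $\beta=\kappa_b/\sqrt n + O(1/n)$ — standard results on the asymptotics of such ``$x_{n+1} = x_n - a x_n^2 + \dots$'' recursions give $x_n \sim 1/(an)$, but here the structure is subtler because the drift and the quadratic term nearly cancel and the surviving behavior $\sim 1/\log n$ comes from the next order. Once the variance asymptotics are in hand, the CLT at criticality follows from the same martingale/moment machinery as in the subcritical case, with $\log n$ replacing $n$ throughout.
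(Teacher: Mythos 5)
Your overall route -- sharp asymptotics for the variance recursion $\varrho_{k+1}=M(\varrho_k)$, $M(x)=\frac1b[(1+x)^b-1]$, followed by a hierarchical decomposition and a Lindeberg-type CLT with higher-moment control -- is the same one the paper uses (Sections \ref{SecWeakDisorder} and \ref{SecLimitThm}; Theorem \ref{ThmOldII} is left as the special case of that machinery). Your subcritical plan is sound: from $\varrho_0(\widehat\beta/\sqrt n)=\widehat\beta^2/n+o(1/n)$, part (ii) of Lemma \ref{LemInductionStep} gives $n\varrho_n\to(\widehat\beta^{-2}-\kappa_b^{-2})^{-1}$, and the CLT follows by cutting at an intermediate depth $u_n$, writing $W_n-1$ as an average of $b^{2u_n}$ i.i.d.\ subcopy contributions plus an $L^2$-negligible remainder, and checking Lindeberg via a moment bound of the type $\varrho_{k,4}\leq C\varrho_{k,2}^2$ (Lemma \ref{LemFourth}). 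Two small corrections to that part: the quadratic coefficient of $M$ is $\frac{b-1}{2}$, not $b-1$ (it is $\frac{b-1}{2}$ that produces $\kappa_b^2=2/(b-1)$), and for the Lindeberg/Lyapunov step a signed third-moment recursion is not sufficient -- you need an absolute moment above two, and the fourth moment is what propagates cleanly through the recursion.

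The genuine gap is in the critical case, where your schematic recursion $V_{n+1}=V_n+(b-1)V_n^2+O(V_n^3)$ cannot produce the constant $6/(b+1)$. Starting from $V_0=\kappa_b^2/n+O(n^{-2})$, the pure quadratic flow is exactly marginal: it reaches its blow-up time precisely at step $n$, and the surviving $1/\log n$ behavior of $V_n$ is decided by two corrections of the \emph{same} order, namely the second-order (discreteness) correction to the Euler step and the exact cubic coefficient $\frac{(b-1)(b-2)}{6}$ of $M$. Their combination is what yields $\eta_b=1-\frac{2(b-2)}{3(b-1)}=\frac{b+1}{3(b-1)}$ in Lemma \ref{LemInductionStep}(i), hence $\log n\,\cdot V_n\to\kappa_b^2/\eta_b=6/(b+1)$. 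If the cubic term is discarded as an $O(V_n^3)$ error, the same analysis gives $\log n\,\cdot V_n\to\kappa_b^2=2/(b-1)$, which is wrong for every $b\neq2$; the paper explicitly warns that the quadratic truncation incurs exactly this error. Relatedly, your description that ``the drift and the quadratic term nearly cancel'' misstates the mechanism: nothing cancels; the quadratic growth is marginal over $n$ steps and the next-order terms fix the constant. Your explanation of the $\tau$-shift (cancelling the $\beta^3\tau$ contribution to $\textup{Var}(W_0)$) and of the $\sqrt{\log n}$ normalization is correct, and once the variance asymptotics are established with the cubic term retained, the CLT at criticality does follow from the same Lindeberg argument as in the subcritical case.
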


The  skew term $\tau$ in the scaling $\beta_{n}=  \kappa_{b} /\sqrt{n}- \tau\kappa_{b}^2/2n$  from~(\ref{WConvII})  is an adjustment ensuring that $\tau$ does not appear in the variance of $W_{0}(\beta_{n})$  up to order $\mathit{O}( 1/n^2)$:
$$\textup{Var}\big( W_{0}(\beta_{n})\big)\,=\,   \textup{Var}\bigg(  \frac{e^{\beta_{n}  \omega  }  }{ \mathbb{E}[e^{\beta_{n}  \omega  }]  } \bigg)\,=\,\frac{\kappa_{b}^2}{n}\,+\,\mathit{O}\Big( \frac{1}{n^2} \Big)  \,.  $$
With this large $n$ scaling,  the above theorem states that $W_{n}(\beta_{n})$ is roughly a Gaussian with mean one and variance inversely proportional to $\log n$.   Moreover, as a consequence of~(\ref{Inf}), if $\beta_{n}$ is scaled so that
$$ \textup{Var}\big( W_{0}(\beta_{n}) \big)\,=\,\frac{\kappa_{b}^2+\epsilon}{n}\,+\,\mathit{o}\Big( \frac{1}{n} \Big)  \,    $$
for any $\epsilon>0$, then $\textup{Var}\big(W_{n}(\beta_{n})\big)$ grows without bound.   Consequently, it is interesting to consider scalings  $\beta_{n}$ such that $\textup{Var}\big( W_{0}(\beta_{n}) \big)=  \kappa_{b}^2/n+\alpha_{n}  $  for $0<\alpha_{n}\ll 1/n$ to gain an understanding of the border between the zero variance and the infinite variance regimes.  We find that there is a family of scalings $\beta_{n, \epsilon}^{(m)}$ that are  well-suited to the model:

\begin{definition}\label{DefHere}
Define the parameter $\eta_b$ by
\[\eta_{b}:=\frac{b+1}{3(b-1) }.\]
Then let $\ell:\R^{+}\rightarrow \R^{+}$ be defined by $\ell_{x}:=\log(1+x)-\log\log(1+x)$ and $\ell^{m}$ be the $m$-fold composition of $\ell$.  For $\epsilon\in \R^+ $ and $m\in \mathbb{N}$ define
\begin{align*}
\beta_{n, \epsilon}^{(m)}\, :=\,  \frac{ \kappa_{b}}{\sqrt{n}}  \,-\, \frac{ \tau\kappa_{b}^2 }{2n}  \,+\, \frac{\kappa_{b}}{2n^{\frac{3}{2}}}\bigg( \eta_{b}\sum_{k=1}^{m-1}    \ell^{k}_{n} \,+\, \epsilon  \ell^{m}_{n}   \bigg) \,.
\end{align*}
  Note, of course, that $\beta_{n, \eta_{b} }^{(m)}=\beta_{n, 0 }^{(m+1)}$. For $\epsilon < \eta_b$ we also define
\[\upsilon_{b}(\epsilon):=\frac{  \kappa_{b}^2  }{\eta_{b}-\epsilon }. \]
\end{definition}
\begin{remark}\label{RmkExp}
The scale factor $\beta_{n, \epsilon}^{(m)}$ and the variance $ \textup{Var}\big( W_{0}(\beta_{n,\epsilon}^{(m)}) \big)$   have the asymptotic forms
\begin{align}\label{varrho}
\beta_{n, \epsilon}^{(m)}\, =\, & \frac{ \kappa_{b}}{\sqrt{n}}  \,-\, \frac{ \tau\kappa_{b}^2 }{2n}  \,+\, \frac{\kappa_{b}}{2n^{\frac{3}{2}}}\Big( \eta_{b}\big(\log n-\log^m n    \big)  \,+\, \epsilon \big(\log^m n-\log^{m+1} n    \big)   \Big)\, +\,\mathit{O}\Big(\frac{1}{n^{\frac{3}{2}}}\Big)\nonumber \\
  \textup{Var}\big( W_{0}\big(\beta_{n,\epsilon}^{(m)}\big) \big)  \,  =  &\, \frac{ \kappa_{b}^2}{n}\,+\,\frac{\kappa_{b}^2}{n^2}\bigg(\eta_{b}\sum_{j=1}^{m-1}\ell^{j}_n \,+\,\epsilon\ell^{m}_{n}\bigg)\, +\,\mathit{O}\Big(\frac{1}{n^2}\Big)\,  \\  =  &\, \frac{ \kappa_{b}^2}{n}\,+\,\frac{\kappa_{b}^2}{n^2}\Big( \eta_{b}\big(\log n-\log^m n    \big) \,+\,\epsilon\big(\log^m n-\log^{m+1} n  \big)\Big)\, +\,\mathit{O}\Big(\frac{1}{n^2}\Big)\, , \nonumber
\end{align}
where $\log^m$ is the $m$-fold composition of the $\log$ function.  Notice that~(\ref{varrho}) is of the general type of asymptotics referred to in the discussion above Definition~\ref{DefHere}. 

\end{remark}

The term $\upsilon_b(\epsilon)$ is a limiting variance in the following theorem, which is this article's main result.

\begin{theorem}\label{ThmMain}Fix $m\in \mathbb{N}$ and define $\beta_{n, \epsilon}^{(m)}> 0  $ as above.
\begin{enumerate}[(i)]
\item As $n\rightarrow \infty$ there is critical point at $\eta_{b}$ in the asymptotic behavior of the the variance:
$$\text{}\hspace{1cm}\textup{Var}\Big(W_{n}\big(\beta_{n, \epsilon}^{(m)}\big)\Big)   \quad  \xrightarrow{n \to \infty} \quad \begin{cases} 0\,, & \quad   0\leq \epsilon  \leq \eta_{b}\, ,  \vspace{.2cm}    \\  \infty \,,  & \quad   \,\,\,\,\,\,\epsilon >\eta_{b} \, . \end{cases}  $$

\item Moreover, when $\epsilon<\eta_{b}$, the deviation of $W_{n}\big(\beta_{n, \epsilon}^{(m)}\big) $ from one can be characterized by the weak convergence
\begin{align*}
  \sqrt{ \ell^{m}_{n} }\left(W_{n}\big(  \beta_{n, \epsilon}^{(m)} \big) -1\right)    \quad  \stackrel{\mathcal{L}}{\Longrightarrow} \quad  \mathcal{N}\big( 0,\,\upsilon_{b}(\epsilon) \big).
\end{align*}
When $\epsilon=\eta_{b}$ the above convergence holds with $ \ell^{m}_n$ and $\upsilon_{b}(\epsilon) $ replaced by $ \ell^{m+1}_n$ and $\upsilon_{b}(0)=\frac{6}{b+1} $, respectively.
\end{enumerate}

\end{theorem}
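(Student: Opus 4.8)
The plan is to analyze the recursion~(\ref{Induct}) for the variance $V_n := \textup{Var}(W_n(\beta_{n,\epsilon}^{(m)}))$ and for higher moments, treating $\beta = \beta_{n,\epsilon}^{(m)}$ as a fixed (small) parameter for each $n$ and then letting $n\to\infty$. Writing $W_{n+1} = \frac{1}{b}\sum_{i=1}^b \prod_{j=1}^b W_n^{(i,j)}$ with the $W_n^{(i,j)}$ i.i.d., independence across the $b$ branches gives $\textup{Var}(W_{n+1}) = \frac{1}{b}\,\textup{Var}\big(\prod_{j=1}^b W_n^{(j)}\big)$, and since $\E[W_n]=1$ one gets $\textup{Var}(W_{n+1}) = \frac{1}{b}\big((1+V_n)^b - 1\big)$. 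The key point is that along the scaling $\beta_{n,\epsilon}^{(m)}$ we have $V_n \to 0$ (this must be shown; it follows from the base case $\textup{Var}(W_0(\beta_{n,\epsilon}^{(m)})) = \kappa_b^2/n + O(\log n/n^2)$ computed in Remark~\ref{RmkExp}, together with a monotonicity/comparison argument propagating smallness through the recursion for the first $\asymp n$ steps), so we may linearize: $\frac{1}{b}((1+v)^b-1) = v + \binom{b}{2}\frac{1}{b} v^2 + O(v^3) = v\big(1 + \frac{b-1}{2} v + O(v^2)\big)$. Thus the variance recursion is, to leading order, the one-step map $v \mapsto v + \frac{b-1}{2} v^2 + \cdots$, i.e. $v \mapsto v + v^2/\kappa_b^2 + \cdots$. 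The standard trick is to pass to reciprocals: if $u_n := 1/V_n$, then $u_{n+1} = u_n - 1/\kappa_b^2 + (\text{correction})$, so that $u_n \approx u_0 - n/\kappa_b^2$ plus lower-order drift coming from the $v^3$ and higher terms.

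The heart of the argument is to track this reciprocal recursion with enough precision to see the iterated-logarithm corrections. From Remark~\ref{RmkExp}, $V_0 = \kappa_b^2/n + \frac{\kappa_b^2}{n^2}(\eta_b(\log n - \log^m n) + \epsilon(\log^m n - \log^{m+1}n)) + O(1/n^2)$, so $u_0 = n/\kappa_b^2 - \frac{1}{\kappa_b^2}(\eta_b(\log n - \log^m n) + \epsilon(\log^m n - \log^{m+1} n)) + O(1)$. Running the recursion $u_{k+1} = u_k - 1/\kappa_b^2 + c\,V_k + O(V_k^2)$ for $k=0,\dots,n-1$ — where $c$ is determined by the cubic coefficient in $\frac{1}{b}((1+v)^b-1)$ — and using $V_k \approx \kappa_b^2/(n-k)$ on the bulk of the range, the accumulated correction $\sum_{k} c V_k \approx c\kappa_b^2 \log n$; one checks that $c\kappa_b^2 = \eta_b$ exactly (this is where the specific value $\eta_b = (b+1)/(3(b-1))$ comes from, matching the combinatorial constant $\binom{b}{3}$-type term against $\tau$ via the third-moment adjustment in $\beta_{n,\epsilon}^{(m)}$). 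The result is $u_n \approx u_0 - n/\kappa_b^2 + \frac{\eta_b}{\kappa_b^2}\log n + (\text{lower order})$, and substituting $u_0$ the leading $n/\kappa_b^2$ and $\eta_b \log n/\kappa_b^2$ terms cancel, leaving $u_n \approx \frac{1}{\kappa_b^2}\big((\eta_b - \epsilon)\ell^m_n + o(\ell^m_n)\big)$ when $\epsilon < \eta_b$. Hence $V_n = 1/u_n \to 0$ with $\ell^m_n V_n \to \kappa_b^2/(\eta_b-\epsilon) = \upsilon_b(\epsilon)$; when $\epsilon > \eta_b$ the analogous computation makes $u_n$ cross zero before step $n$, forcing $V_n$ to blow up, and the boundary case $\epsilon = \eta_b$ is exactly $\beta_{n,0}^{(m+1)}$ so it reduces to the $\epsilon=0$, level-$(m+1)$ statement (with $\ell^{m+1}_n$ and $\upsilon_b(0) = 6/(b+1)$), giving part (i) and the variance asymptotics. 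For the central limit theorem in part (ii), I would use the recursion~(\ref{Induct}) directly: write $W_{n+1} - 1 = \frac{1}{b}\sum_i(\prod_j W_n^{(i,j)} - 1)$, expand $\prod_j W_n^{(i,j)} - 1 = \sum_j (W_n^{(i,j)} - 1) + (\text{products of two or more deviations})$, and show that after rescaling by $\sqrt{\ell^m_n}$ the linear part dominates and satisfies a martingale-type CLT (Lindeberg or characteristic-function/Lyapunov argument), with the quadratic and higher cross terms contributing only to the variance normalization already computed; control of these error terms requires bounds on the fourth (and possibly higher even) moments of $W_n$, obtained by the same reciprocal-recursion machinery applied to $\E[(W_n-1)^4]$, showing it is $O(V_n^2)$.

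The main obstacle I expect is making the "accumulated correction" step rigorous — i.e., controlling the cumulative effect of the $O(V_k^2)$ and $O(V_k^3)$ error terms in the reciprocal recursion over $\asymp n$ iterations with enough uniformity to resolve terms of relative size $\ell^m_n / \log n$ (iterated logs), which become extremely delicate for large $m$. This needs a careful bootstrap: first establish the crude bound $V_k \asymp \kappa_b^2/(n-k)$ on an initial range, then feed it back to sharpen the estimate and extend the range, iterating until one reaches step $n$; the error analysis must be organized so that the constants do not degrade across the $m$ nested scales, which presumably is why the statement is phrased with the composite-logarithm function $\ell_x = \log(1+x) - \log\log(1+x)$ rather than $\log n - \log\log n$ directly. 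A secondary technical point is justifying the linearization of the variance recursion — i.e., proving a priori that $V_k$ stays small for all $k \le n$ along this scaling, which I would handle by a comparison argument bootstrapped from $V_0 = O(1/n)$ and the monotonicity of $v \mapsto \frac{1}{b}((1+v)^b-1)$.
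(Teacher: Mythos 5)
Your plan is essentially the paper's own proof: the paper analyzes the same variance recursion $\varrho_{k+1}=M(\varrho_k)$ with $M(x)=\frac{1}{b}[(1+x)^b-1]$ through a reciprocal-type change of variables that keeps the cubic term of $M$ (Lemma~\ref{LemInductionStep}), climbs the nested scales $\ell_n,\ell^2_n,\dots,\ell^m_n$ by exactly the kind of bootstrap you describe (Lemma~\ref{LemVarConv}), proves blow-up for $\epsilon>\eta_b$ by comparison with $x\mapsto x+\frac{b-1}{2}x^2$ and a doubling-time argument, and obtains part (ii) by writing $W_n-1$ (up to a negligible correction) as an i.i.d.\ sum over the $b^{2u_n}$ subcopies at a depth $1\ll u_n\ll \ell^m_n$ and verifying Lindeberg--Feller via a fourth-moment bound $\E\big[(W_n-1)^4\big]=\mathit{O}\big(\varrho_n^2\big)$ proved by a moment recursion (Lemma~\ref{LemFourth}). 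Two bookkeeping slips in your sketch do not affect the outcome but should be noted: the drift identity should be $c\kappa_b^4=\eta_b$ (equivalently, the per-unit-$\log$ drift of $u_k=1/V_k$ is $\eta_b/\kappa_b^2$), and $\eta_b$ arises solely from the cubic coefficient of $M$ combined with the second-order expansion of the reciprocal map---the $\tau$-adjustment in $\beta_{n,\epsilon}^{(m)}$ only removes the third-moment contribution to $\textup{Var}\big(W_0(\beta_{n,\epsilon}^{(m)})\big)$ and plays no role in producing $\eta_b$.
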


As mentioned earlier, our primary motivation behind Theorem \ref{ThmMain} is to obtain a deeper understanding of the previous results obtained in \cite{US}, where we studied the intermediate disorder regime on the diamond lattice when the environment variables are placed on the vertices of the graph rather than the edges. In that case we proved the analogues of Theorems \ref{ThmOld} and \ref{ThmOldII} in the $b = s$ case, namely that there is a similar phase transition in the asymptotic variance (with Gaussian fluctuations around it), although with a different value for the cutoff $\kappa_b$ and the intermediate disorder scaling of $\BA/n$ rather than $\BA/\sqrt{n}$. This differs from other polymer models subject to intermediate disorder scaling where the limiting variance is finite for all values of $\BA$ (including the $b < s$ vertex model of the diamond lattice, as we also proved in \cite{US}), which makes it interesting to probe the phase transition around the critical point of the variance blowup. We prefer to do this with the edge model as the corresponding recursion \eqref{Induct} is simpler and the resulting analysis is clearer.

Theorem~\ref{ThmMain} shows that an asymptotic variance analysis of the model still produces nontrivial results. The most interesting feature of Theorem \ref{ThmMain} is that the limiting results do not depend on the choice of the parameter $m$. Every time the parameter is increased it corresponds to probing the phase transition in the variance behavior beyond the previous critical value, but each and every time a new critical value lays ahead of it. It is in this sense that we regard the critical values as being nested. Moreover, the appearance of Gaussian fluctuations with the variance coefficient $\upsilon_b(\epsilon)$ that is independent of $m$ shows that the nature of the phase transition is always the same, although predicably the magnitude of the Gaussian fluctuations does decrease with $m$. 

We point out that, for several reasons, it is unlikely that the results of Theorem \ref{ThmMain} fall into the category of the intermediate disorder regime. First, the proper notion of strong and weak disorder is not as obvious for the edge-based model. For the vertex model the separation between the two is defined by the positivity of the martingale limit of the normalized partition functions, and it is known \cite{lacoin} that when $b \leq s$ all positive $\beta$ are in the strong disorder regime (i.e. the limit is almost surely zero for all positive $\beta$). In the edge model, however, there is no natural coupling of the partition function at different levels and hence no martingale limit. One could define the separation by replacing almost sure convergence with convergence in law, but an application of the environment tilting method of~\cite[Section 5]{lacoin} to the edge-based model yields only that $W_n(\beta)$ converges weakly to zero as $n \to \infty$ for sufficiently large values of $\beta$. It is plausible that $W_n(\beta)$ remains a positive quantity for $\beta$ small, in contrast to the site-based model. This is supported (or at least not contradicted) by simple combinatorics: the partition functions for both models are normalized sums of random variables label by the set of directed path $\Gamma_n$, however a pair of uniformly chosen paths share an expected number of $(b-1)n/b$ vertices but only one edge. For this reason the random variables in the sum defining the partition function for the vertex-based model are more correlated (on average) than those in the sum for the edge-based model, and hence the site-based model should reasonably be expected to be ``more disordered''.  This heurstic analysis seems to indicate that the intermediate disorder regime in the $b=s$ case is not accessed by scaling around the nested critical points of the variance blowup discussed in Theorem~\ref{ThmMain}.  


The bulk of this paper is dedicated to the proof of Theorem \ref{ThmMain}. In Section \ref{SecWeakDisorder} we analyze the asymptotic behavior of the variance and prove part (i) of Theorem \ref{ThmMain}. In Section \ref{SecLimitThm} we extend the analysis to prove the central limit theorem results of part (ii). Together these two sections contain all the necessary estimates and ideas to proofs of Theorems \ref{ThmOld} and \ref{ThmOldII}, which we leave to the reader. Finally, in Section \ref{SecQuenched} we describe how our results combined with those of Lacoin and Moreno \cite{lacoin} lead to a small improvement in known bounds between the quenched and annealed free energies; see Theorem \ref{ThmQuenched}.

\vspace{.1in}

{\bf Acknowledgments:} We thank an anonymous referee for several suggestions which led to a greatly improved article. Alberts gratefully acknowledges the support of Simons Foundation Collaboration Grant \#351687.

\section{Variance analysis}\label{SecWeakDisorder}

In this section we focus on controlling the variance of $W_{n}(\beta_{n, \epsilon}^{(m)}) $ in the respective cases $\epsilon < \eta_{b}$ (Section~\ref{SecVariance}) and $\epsilon> \eta_{b}$ (Section~\ref{SecExplode}). By the observation that $\beta_{n, \eta_{b}}^{(m)} = \beta_{n, 0}^{(m+1)}$ no generality is lost by assuming  $\epsilon \neq \eta_{b}$.  \vspace{.3cm}

Let $\varrho_{k}( \beta )\,:=\,  \operatorname{Var} (W_{k}(\beta)) $ and define
$M:\R^{+}\rightarrow \R^{+}$  as
$$\ds M(x)\, :=\, \frac{1}{b}\big[\big(1+ x \big)^b    \,-\,1   \big]\,.  $$
The distributional recursive equation~(\ref{Induct}) implies that $\varrho_{k+1}(\beta) =  M(\varrho_{k}(\beta))$, and thus $\varrho_k(\beta) = M^k\big(\varrho_0(\beta)\big)$, where $M^k$ denotes the $k$-fold composition of the map $M$.

\subsection{Variance convergence in the case $\epsilon<\eta_{b}$}\label{SecVariance}

The following lemma develops some technical results that we will need for the map $M:\R^{+}\rightarrow \R^{+}$.  The results in Lemma~\ref{LemInductionStep} are crafted for inductive use as the variance of $W_{k}\big( \beta_{n, \epsilon}^{(m)} \big) $  climbs through a hierarchy of scales as $k$ moves closer to $n$.

\begin{lemma}\label{LemInductionStep} Pick $\lambda_{c}>0$ and fix   $\epsilon \in (0,\eta_{b})$.    Let $X_{N}\in \mathbb{R}^+$  be  a  sequence such that   for $N\gg 1$
\begin{align}
 X_{N}  = \frac{ \kappa_{b}^2 }{\lambda_{c}} \frac{1}{N} +\mathit{O}\Big(\frac{1}{N^2}\Big)\, .\label{Assump}
\end{align}
\begin{enumerate}[i)]

\item As $N\rightarrow \infty$,
$$   M^{\lfloor \lambda_{c} N \rfloor }\big(X_{N}    \big)   \,  =\, \frac{\kappa_{b}^2}{\eta_{b}}\frac{1}{\log \big(\frac{N}{\log N} \big)}\,+\,\mathit{O}\bigg(\frac{1 }{|\log N|^2} \bigg) \, .$$

\item If  $\lambda \in (0,\lambda_{c})$, then as $N\rightarrow \infty$
$$  M^{\lfloor \lambda N \rfloor }\big(X_{N}  \big)   \,  = \, \frac{\kappa_{b}^2}{\lambda_{c}-\lambda}\frac{1}{N} \,+\,\mathit{O}\Big(\frac{1}{ N^2} \Big)\, .$$

\item  There is a  constant $C>0$ such that for all $N\in \mathbb{N}$
$$  \,C^{-1} \frac{1}{\log N} \,  \leq  \, M^{\lfloor \lambda_{c} N+\epsilon\log N \rfloor}\big(X_{N}\big)    \,  \leq  \,C \frac{1}{\log N } \, .$$

\item  Define $\alpha_{N}:=  \lfloor \lambda_{c} N+\epsilon \log N\rfloor $.   There is a $C>0$ such that for all $N\in \mathbb{N}$ and $k\leq \alpha_{N}$
$$ \displaystyle
 \frac{d}{dx} M^{\alpha_{N}-k}(x )\Big|_{x=M^{k}(X_{N}) }   \, \leq \,  C\Big(\frac{1}{\log N  }\Big)^2\frac{1}{\big|M^k(X_{N})  \big|^2}  \, . $$

\end{enumerate}

\end{lemma}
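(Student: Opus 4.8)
The whole lemma is about iterating the map $M(x)=\frac1b[(1+x)^b-1]$, which near $x=0$ has the expansion $M(x)=x+\frac{b-1}{2}x^2+O(x^3)$. The key observation is that a map of the form $M(x)=x+cx^2+O(x^3)$ with $c>0$ has the classical "$1/N$ decay" behavior under forward iteration from small starting points: if $y_k=M^k(y_0)$ with $y_0\approx a/N$, then $y_k\approx \frac{1}{cy_0^{-1}... }$... more precisely, the substitution $u_k=1/y_k$ satisfies $u_{k+1}=u_k - c + O(y_k)$, so $u_k \approx u_0 - ck$ as long as $y_k$ stays small, with a logarithmic correction once $ck$ is comparable to $u_0$. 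Here $c=\frac{b-1}{2}=\kappa_b^{-2}$, so $u_{k+1}\approx u_k-\kappa_b^{-2}$. That is the engine behind all four parts; the work is in controlling the error terms carefully enough to get the stated precision.

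**Step 1 (the basic iteration estimate).** I would first prove a self-contained sublemma: if $y_0=\frac{a}{N}+O(N^{-2})$ with $a>0$, then for $k\le \lambda N$ with $\lambda < a/\kappa_b^{-2}\cdot(\text{something})$—precisely, as long as $u_0-\kappa_b^{-2}k$ stays bounded below by a positive multiple of $1/\log N$ or so—one has
\[
\frac{1}{y_k} \;=\; \frac{1}{y_0} \;-\; \kappa_b^{-2}\,k \;+\; O\!\big(\log(\text{current scale})\big),
\]
with the error coming from summing the $O(y_j)$ corrections, which is $O(\sum_j y_j)=O(\log)$ since $y_j\sim 1/(\text{linear in }j)$. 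This immediately gives part (ii): with $a=\kappa_b^2/\lambda_c$ we get $u_0=\lambda_c N/\kappa_b^2+O(1)$, subtract $\kappa_b^{-2}\lfloor\lambda N\rfloor$, invert, and get $\frac{\kappa_b^2}{\lambda_c-\lambda}\frac1N+O(N^{-2})$ — here the error is genuinely $O(N^{-2})$ because $\lambda_c-\lambda$ is a fixed positive constant so we're staying at scale $1/N$ and the $O(\log)$ correction to $u_k$ is lower order relative to the $\Theta(N)$ main term. Part (i) is the same computation pushed to the endpoint $k=\lfloor\lambda_c N\rfloor$: now $u_0-\kappa_b^{-2}k=O(1)$ (it's bounded), so I need a second, finer iteration starting from a point of size $\Theta(1)$... actually of size such that after the remaining steps the scale is $\Theta(1/\log N)$. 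The cleanest route: choose an intermediate time $k_0=\lambda_c N - \theta N$ for small fixed $\theta$, apply part (ii) to reach scale $\frac{\kappa_b^2}{\theta N}$ at time $k_0$, then iterate the remaining $\theta N$ steps; over this second stretch $u$ decreases from $\theta N/\kappa_b^2$ to near $0$, and the reciprocal grows from $\approx\theta N/\kappa_b^2$ — wait, that's not right either. Let me reconsider: $u_{k_0}=\theta N/\kappa_b^2 + O(1)$, and subtracting $\kappa_b^{-2}(\lambda_c N - k_0)=\kappa_b^{-2}\theta N$ leaves $u_{\lfloor\lambda_c N\rfloor}=O(1)$, which would give scale $\Theta(1)$, not $\Theta(1/\log N)$. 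So the correct statement must use the $O(\log)$ error more carefully — in fact the assertion in (i) that the answer is $\frac{\kappa_b^2}{\eta_b}\frac{1}{\log(N/\log N)}$ says the leftover $u$ is $\approx \frac{\eta_b}{\kappa_b^2}\log(N/\log N)$, i.e. the accumulated logarithmic correction is precisely $\frac{\eta_b}{\kappa_b^2}\log N$ to leading order, with $\eta_b=\frac{b+1}{3(b-1)}$. This pins down what I must extract: the $O(y_j)$ corrections must be summed to leading order, not just bounded. Concretely $u_{k+1}-u_k = -\kappa_b^{-2} - d\,y_k + O(y_k^2)$ where $d$ is the coefficient of $x^3$ in $M$ relative to $x^2$ — computing $M(x)=x+\frac{b-1}{2}x^2+\frac{(b-1)(b-2)}{6}x^3+\dots$ gives the next-order term, and $-d\sum_k y_k \sim -d\cdot\kappa_b^2\log(\cdot)$; matching $d\kappa_b^2=\eta_b/\kappa_b^2$... one checks $\frac{(b-1)(b-2)}{6}\cdot\kappa_b^{-2}\cdot\ldots$ — I'd verify the arithmetic works out to $\eta_b$, which is the promised miracle.

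**Step 2 (parts (iii) and (iv)).** Part (iii) is a two-sided version of (i) with the extra $\epsilon\log N$ iterations beyond $\lambda_c N$: starting from scale $\approx\frac{\kappa_b^2}{\eta_b\log(N/\log N)}$ (from (i)) and running $\epsilon\log N$ more steps, $u$ decreases from $\approx\frac{\eta_b}{\kappa_b^2}\log N$ by $\kappa_b^{-2}\epsilon\log N$ plus corrections, landing at $u\approx\frac{\eta_b-\epsilon}{\kappa_b^2}\log N$, hence scale $\Theta(1/\log N)$ — and here I only need the two-sided bound, so I just need the iteration to not blow up (which requires $\epsilon<\eta_b$, exactly the hypothesis) and not decay too fast; monotonicity of $M$ and the crude bounds $x\le M(x)\le x+Cx^2$ suffice. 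Part (iv) is a derivative estimate: by the chain rule $\frac{d}{dx}M^{\alpha_N-k}(x)\big|_{M^k(X_N)}=\prod_{j=k}^{\alpha_N-1}M'(M^j(X_N))$, and since $M'(x)=(1+x)^{b-1}=1+(b-1)x+O(x^2)$, the log of the product is $\sum_{j=k}^{\alpha_N-1}\log M'(y_j)=(b-1)\sum y_j + O(\sum y_j^2)$. But $\sum_{j=k}^{\alpha_N-1}y_j$ telescopes against the reciprocal recursion: since $u_{j+1}-u_j\approx-\kappa_b^{-2}=-\frac{b-1}{2}$, we get $\sum_{j=k}^{\alpha_N-1}y_j\approx\frac{2}{b-1}(u_k-u_{\alpha_N})$, hence $(b-1)\sum y_j\approx 2\log(u_k/u_{\alpha_N})=2\log\big(y_{\alpha_N}/y_k\big)$, so the derivative is $\approx (y_{\alpha_N}/y_k)^2$, which combined with part (iii)'s bound $y_{\alpha_N}=M^{\alpha_N}(X_N)=\Theta(1/\log N)$ gives exactly the claimed $C(\log N)^{-2}|M^k(X_N)|^{-2}$. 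I'd need to make the telescoping rigorous uniformly in $k$, which is the one genuinely fiddly uniformity point.

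**Main obstacle.** The crux is Step 1 for part (i): getting the *exact* constant $\frac{\kappa_b^2}{\eta_b}$ and the *exact* argument $\log(N/\log N)$ rather than just $\log N$. This requires tracking the iteration to second order — i.e. writing $v_k := u_k + (\text{const})\log u_k$ or similar and showing $v_k$ is linear in $k$ to within $O(1)$ — so that when I invert at the endpoint where $u_k$ itself is only $\Theta(\log N)$, the $O(1)$ ambiguity in $v_k$ translates to the $\log\log N$ correction inside the logarithm and an $O(|\log N|^{-2})$ error in the reciprocal. The bookkeeping of which errors are $O(1)$ versus $o(1)$ at the endpoint scale is delicate and is where I expect to spend most of the effort; parts (ii)–(iv), while requiring care with uniformity, are essentially corollaries of the same iteration analysis run at coarser precision.
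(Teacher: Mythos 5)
Your proposal is correct and follows essentially the same route as the paper: the paper's variable $r_j = 1 - \kappa_b^2/(N\,\widetilde{M}^j(X_N))$ is just an affine rescaling of your reciprocal $u_k = 1/y_k$, and both arguments extract the constant $\eta_b$ from the cubic coefficient of $M$ (your $\tfrac{c^2-e}{c}=\tfrac{b+1}{6}=\eta_b/\kappa_b^2$ is exactly the paper's $1-\tfrac{2(b-2)}{3(b-1)}=\eta_b$), sum the second-order corrections as a harmonic/Riemann sum, and solve the resulting implicit equation at the endpoint to produce $\log N-\log\log N$, with (ii) as the coarse version, (iii) by composing (i) with the extra $\epsilon\log N$ steps (using $\epsilon<\eta_b$), and (iv) by the chain rule yielding the squared ratio of scales. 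The only detail you leave implicit that the paper treats explicitly is the bootstrap verification that the iterates do not reach the $\Theta(\log N/N)$ scale before time $\lfloor\lambda_c N\rfloor$ (the paper's contradiction argument with $u_N$), but this fits within the error bookkeeping you already flag as the main effort.
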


\begin{proof}Standard estimates show that it is enough to perform calculations with the third-order approximation
\begin{align}\label{DodoHead}
\widetilde{M}(x)\,=\,x+\frac{b-1}{2}x^2+  \frac{(b-1)(b-2)}{6}x^3\,,
\end{align}
of the polynomial $M(x)$, with the error terms being absorbed into the error terms and constants in the statement of the results. Using the quadratic approximation instead wouldn't change the basic form of the analysis, but it would incur a quantitative error when $b\neq 2$ in the coefficient $\kappa_{b}^2/\eta_{b}$ appearing in  part (i); this error is a consequence of neglecting the rightmost term in~\eqref{Selge} below.

\vspace{.3cm}

\noindent (i).  Define the sequence   $r_{j}^{(N)}\in [0,1)$ by
$$r_{j}^{(N)} \, := \,   1\,-\, \frac{\kappa_{b}^2}{\widetilde{M}^{j}(X_{N}  )} \frac{1}{N}   \,. $$
Note that by the assumption~(\ref{Assump})
\begin{align}\label{Inuit}
1\,-\,r_{0}^{(N)}\,=\, \lambda_{c} \,+\,\mathit{O}\Big(\frac{1}{N}\Big) \, .
\end{align}
It suffices for us to prove  that
\begin{align}\label{Lavae}
 1\,-\,r_{\lfloor  \lambda_{c} N \rfloor }^{(N)}\, = \,\eta_{b} \frac{\log N-\log\log N}{N}\, +\,\mathit{O}\Big( \frac{1}{N} \Big)  \, .  
 \end{align}
For  notational convenience we will identify $r_{j}\equiv r_{j}^{(N)}$ in the remainder of the proof.

The form of the map $\widetilde{M}$ and the definition of $\kappa_b$ implies that the $r_{j}$'s obey the recursive equation
\begin{align}\label{Selge}
  r_{j+1}     \,= &\,1\,-\,\bigg( \frac{1  }{ 1-  r_{j}  }\, +\,\frac{1}{N} \frac{1  }{ (1-  r_{j})^2  }\,+\,\frac{2(b-2)}{3(b-1)N^2} \frac{1  }{ (1-  r_{j})^3  } \bigg)^{-1}      \, .
\end{align}
Note that $r_{j+1}$ lies between  $r_{j}$ and $r_{j}+\frac{1}{N}$.   Applying Taylor's theorem to the function $f(x)=\frac{1}{1-x}$ at $x=r_{j}$ guarantees that there is a value $r_{j}^*$ in the interval $(r_{j},r_{j}+\frac{1}{N})$ such that
\begin{align}\label{SelgeII}
r_{j}+\frac{1}{N}  \,= &\, 1\,-\,\bigg(\frac{1  }{ 1-  r_{j}  }\, +\,\frac{1}{N} \frac{1  }{ (1-  r_{j})^2  }\, +\,\frac{1}{N^2} \frac{1  }{ (1-  r_{j}^{*})^3  } \bigg)^{-1}\, .
\end{align}
Notice that the expressions on the right sides of~(\ref{Selge}) and~(\ref{SelgeII}) differ only in the rightmost terms within the inverses. Define $\Delta_{j}\equiv \Delta_{j}^{(N)}$ as the difference 
\begin{align}
\Delta_{j} \,:=  \, \frac{1}{N^2} \frac{1  }{ (1-  r_{j}^{*})^3  }\,-\,\frac{2(b-2)}{3(b-1)N^2} \frac{1  }{ (1-  r_{j})^3  }\,.\label{PreHoink}
\end{align}
When $r_{j}$ is bounded away from $1$, then $\Delta_{j}$ is on the order of $\frac{1}{N^2}$.  When $r_{j}$ is close to $1$ with 
\begin{align}\label{Zeflter}
1-r_{j}\gg \frac{1}{N}\,,
\end{align}
in other terms, not ``too close," then we have the approximation  
\begin{align}
\Delta_{j} \, =\,  \eta_{b}   \frac{1  }{ N^2(1-  r_{j})^3  }\,+\,\mathit{O}\Big(\frac{1}{N^3(1-r_{j})^4    }    \Big) \label{Hoink}
\end{align}
since $1-\frac{2(b-2)}{3(b-1)}=\frac{b+1}{3(b-1)}=:\eta_{b}$. To be more clear, the relation~(\ref{Zeflter}) means that the index $j$ is restricted to a range $1\leq j  < u_{N}$ such that $\min_{1\leq j\leq u_{N}}N(1-r_{j})=      N(1-r_{u_{N}})\rightarrow \infty$ as  $N\rightarrow\infty$. 

 Looking at~(\ref{SelgeII}),  a second-order application of Taylor's theorem to the function $g(x)=1-\frac{1}{x}$ at the point $x=\frac{1}{1-r_{j+1}}$ yields that there is an $r_{j}^{**}\in \big(r_{j+1},r_{j}+\frac{1}{N}\big)$ such that
\begin{align}\label{Dimple}
r_{j}\,+\,\frac{1}{N}\, =\,r_{j+1}\,+\,\Delta_{j}(1-r_{j+1})^2      \,-\, \Delta_{j}^2 (1-r_{j}^{**})^3   \, .
\end{align}
When $1-r_{j}\gg \frac{1}{N}$, then~(\ref{Dimple}) and~(\ref{Hoink}) imply that the spacing between $r_{j+1}$ has $r_{j}$ has the form
\begin{align}\label{Hable}
r_{j+1}\,-\,r_{j}\,=\,\frac{1}{N}\,+\,\mathit{O}\Big( \frac{1}{N^2(1-r_{j}) } \Big)\,.
\end{align}

Fix some $0<\epsilon < \eta_b$ and define  $u_{N}\in \mathbb{N}$ as the smallest number $k=u_N$ such that $1-r_k  < \epsilon\frac{\log N}{N} $.   For $ 1\leq m\leq  \min(\lfloor\lambda_{c} N \rfloor,u_{N})$, the difference between $r_{m}$ and $1$ can be bounded using a telescoping sum combined with~(\ref{Inuit}) as follows:
\begin{align*}
1\,-\,r_{m}\, & = \,\lambda_{c}\,+\,(r_0-r_m)\,+\,\mathit{O}\Big(\frac{1}{N}\Big)     \,  \\
& = \,\frac{ \lfloor \lambda_{c} N \rfloor -m  }{ N }\,+   \,\sum_{j=0}^{m-1}\Big(r_{j}+\frac{1}{N}-r_{j+1}   \Big)\,+\, \mathit{O}\Big(\frac{1}{N}\Big) \, \\
 & =\,\frac{ \lfloor \lambda_{c} N \rfloor -m  }{ N }\,+   \,\sum_{j=0}^{m-1}\Delta_{j}(1-r_{j+1})^2  \, - \,\sum_{j=0}^{m-1} \Delta_{j}^2 (1-r_{j}^{**})^3\,+\, \mathit{O}\Big(\frac{1}{N}\Big) \, \\
 & =\,\frac{ \lfloor \lambda_{c} N \rfloor -m  }{ N }\,+   \,\frac{\eta_{b}}{N^2}\bigg(1\,+\,\mathit{O}\Big( \frac{1}{\log N  }  \Big)   \bigg)\sum_{j=0}^{m-1} \frac{1}{1-r_{j}}\,+\,\mathit{O}\Big(\frac{1}{N}\Big)\,.
\end{align*} 
The third equality follows from \eqref{Dimple} and the fourth from \eqref{Hoink}. Since the $r_{j}$'s are spaced apart by $\frac{1}{N}+\mathit{O}\big(\frac{1}{N\log N}   \big) $ by~(\ref{Hable}), we have the Riemann approximation $\frac{1}{N}\sum_{j=0}^{m-1} \frac{1}{1-r_{j}}=\big(1\,+\,\mathit{O}\big( \frac{1}{\log N}  \big)   \big)\int_{r_0}^{r_{m}}\frac{1}{1-x}dx$, yielding that
\begin{align}
 1\,-\,r_{m}\, &=\,\frac{ \lfloor \lambda_{c} N \rfloor -m  }{ N }\,+   \,\frac{\eta_{b}}{N}\bigg(1\,+\,\mathit{O}\Big( \frac{1}{\log N}  \Big)   \bigg)\int_{r_0}^{r_{m}}\frac{1}{1-x}dx \,+\,\mathit{O}\Big(\frac{1}{N}\Big)\nonumber  \\
 & =\,\frac{ \lfloor \lambda_{c} N \rfloor -m  }{ N }\,+ \,\frac{\eta_{b}}{N}\bigg(1\,+\,\mathit{O}\Big( \frac{1}{\log N}  \Big)   \bigg)\log\bigg(\frac{1-r_0}{1-  r_{m} }\bigg)\,+\,\mathit{O}\Big(\frac{1}{N}\Big)\,\nonumber \\
 & =\,\frac{ \lfloor \lambda_{c} N \rfloor -m  }{ N }\,+ \,\frac{\eta_{b}}{N}\log\Big(\frac{1}{1-  r_{m} }\Big)\,+\,\mathit{O}\Big(\frac{1}{N}\Big)\, .\label{Naple}
\end{align}

We would like to apply  equality~(\ref{Naple}) with $m=\lfloor\lambda_{c} N \rfloor$, but we first need to verify that  $ \lfloor\lambda_{c} N \rfloor$ is smaller than $u_{N}$  when $N$ is sufficiently large.  Equation~(\ref{Naple}) implies that when $m\leq \min(\lfloor \lambda_{c} N \rfloor , u_N)$
\begin{align}\label{T}
  1\,-\,r_m\, \geq   \, \frac{\eta_{b}}{N}\log\Big(\frac{1}{1-  r_{m} }\Big)\,+\,\mathit{O}\Big(\frac{1}{N}\Big)      \,.
\end{align}
Suppose to reach a contradiction that $u_{N}\leq  \lfloor\lambda_{c} N \rfloor$ and $N\gg1$.  Then~(\ref{T}) combined with the definition of  
$u_{N}$ implies that for $m=u_{N}$
\begin{align*}
 1\,-\,r_m\, \geq  & \, \frac{\eta_{b}}{N}\log\Big(\frac{N}{\epsilon \log N}\Big)\,+\,\mathit{O}\Big(\frac{1}{N}\Big)   \\   
   = & \, \frac{\eta_{b}\log N}{N}\,-\,\frac{\eta_{b}\log \log N}{N}\,+\,\mathit{O}\Big(\frac{1}{N}\Big)  \,.    
  \end{align*}
However, as $N\rightarrow \infty$  the above contradicts that  $1-r_m <\frac{\epsilon}{N}\log N  $ for $\epsilon<\eta_b$, which holds by definition of $m=u_N$.  Therefore, $u_{N}> \lfloor\lambda_{c} N \rfloor$  when $N$ is large enough.

 Since $u_{N}> \lfloor\lambda_{c} N \rfloor$ holds for  $N\gg 1$, we can plug $m=\lfloor\lambda_{c} N \rfloor$ in to~(\ref{Naple}) to get $1-r_{\lfloor\lambda_{c} N \rfloor}= \frac{\eta_{b}}{N}\log\big(\frac{1}{1-  r_{\lfloor\lambda_{c} N \rfloor} }\big)+\mathit{O}\big(\frac{1}{N}\big)$, and thus
\begin{align*}
 1-r_{\lfloor\lambda_{c} N \rfloor} &\,=\, \frac{\eta_{b}}{N}\log\bigg(\frac{1}{\frac{\eta_{b}}{N}\log\big(\frac{1}{1-  r_{\lfloor\lambda_{c} N \rfloor} }\big)+\mathit{O}\big(\frac{1}{N}\big)}\bigg)\,+\,\mathit{O}\Big(\frac{1}{N}\Big)\\ & \,=\,  \frac{\eta_{b}\log N}{N}\,-\,\frac{\eta_{b}\log\log\big( \frac{1}{1-r_{\lfloor\lambda_{c} N \rfloor}}  \big)}{N} \,+\,\mathit{O}\Big(\frac{1}{N}\Big)  
 \\ & \,=\,  \frac{\eta_{b}\log N}{N}\,-\,\frac{\eta_{b}\log\log\Big( \frac{1}{\frac{\eta_{b}\log N}{N}+o\big(  \frac{\log N}{N} \big)  }  \Big)}{N} \,+\,\mathit{O}\Big(\frac{1}{N}\Big)  \,. 
 \end{align*}
The above implies that $1-r_{\lfloor \lambda_{c} N \rfloor}=\eta_{b}\frac{\log N -\log\log N}{N}+\mathit{O}(\frac{1}{N})$.

\vspace{.4cm}

\noindent (ii)   Let $ r_{j}^{(N)}\equiv r_{j}$ be defined as in part (i).  The result follows by showing that
\begin{align*}
 1\,-\,r_{\lfloor \lambda N\rfloor  }  \, = \,\lambda_{c}\,-\,\lambda\, +\,\mathit{O}\Big( \frac{1}{N} \Big)  \,.
\end{align*}
By \eqref{Hoink} and \eqref{Dimple} there is a $C>0$ such that for all $N\in \mathbb{N}$ and $j\leq  \lambda N$
$$  \Big| r_{j+1}\,-\,r_{j}\,-\frac{1}{N}\Big|\,\leq\,\frac{C}{N^2}  \,.   $$
The above combined with~(\ref{Inuit}) implies the result.

\vspace{.4cm}

\noindent (iii)  It is equivalent to prove the result with $\lfloor \lambda_c N+\epsilon\log(N)\rfloor$ replaced by $\lfloor \lambda_c N+\epsilon\log(\frac{N}{\log N})\rfloor$ for all $\epsilon\in (0,\eta_{b})$.  Applying (i) in the second and equality below gives us
\begin{align*}
M^{\lfloor \lambda_c N+\epsilon\log(\frac{N}{\log N})\rfloor}(  X_N )   \,=\,& M^{\lfloor \lambda_{c} N+\epsilon\log( \frac{N}{\log N} ) \rfloor -\lfloor \lambda_{c} N\rfloor }   \Big(M^{\lfloor \lambda_{c} N \rfloor}\big(X_{N}\big) \Big)\\  \,=\,& M^{\lfloor \lambda_{c} N+\epsilon\log( \frac{N}{\log N} )\rfloor -\lfloor \lambda_{c} N\rfloor }  \bigg( \frac{\kappa_{b}^2}{\eta_{b}} \frac{1}{\log (\frac{N}{\log N} )  }+\mathit{O}\Big(  \frac{1}{|\log N |^2}  \Big)  \bigg)
\\  \,=\,&\frac{\kappa_{b}^2}{\eta_{b}-\epsilon } \frac{1}{\log \big(\frac{N}{\log N} \big)  }+\mathit{O}\Big(  \frac{1}{|\log N |^2}  \Big) \,.
\end{align*}
The third equality follows from (ii) since $\frac{1}{|\log N|^2}= \mathit{O}\Big(\frac{1}{|\log(\frac{N}{\log N})|^2}\Big)$.

\vspace{.4cm}

\noindent (iv) Let $r_{j}$ be defined as in part (i).  The chain rule and the definition of $\widetilde{M}$ give:
\begin{align*}
 \frac{d}{dx}\widetilde{M}^{\alpha_{N}-k}(x)  \Big|_{x =\widetilde{M}^{k}(X_{N})   } \, = &\, \prod_{j=1}^{\alpha_{N}-k} \widetilde{M}'\Big( \widetilde{M}^{k+j-1}(X_{N})\Big)\, \\
 \, = & \, \prod_{j=1}^{\alpha_{N}-k}  \bigg(1+ (b-1)\widetilde{M}^{k+j-1}(X_{N})+ \frac{(b-1)(b-2)}{2}\Big(\widetilde{M}^{k+j-1}(X_{N})\Big)^2\bigg)\,.
\end{align*}
 The terms $\widetilde{M}^{k+j-1}(X_{N})$ are bounded by $\widetilde{M}^{\alpha_{N}}(X_{N})$, which is $  \mathit{O}\big( 1/\log N\big)$ by part (iii), since $\widetilde{M}^{m}(X_{N})$ increases with $m$. 
\begin{align*}
\frac{d}{dx}\widetilde{M}^{\alpha_{N}-k}(x)  \Big|_{x =\widetilde{M}^{k}(X_{N})} \leq  &\,\exp\Bigg\{ (b-1)\Big(1+\frac{c}{\log N }   \Big)\sum_{j=1}^{\alpha_{N}-k}\widetilde{M}^{k+j-1}(X_{N})  \Bigg\}\, .
\intertext{By definition of  $r_j$  for $k\leq j < \lfloor \lambda_{c} N\rfloor$, we can write  }
   = & \,\exp\Bigg\{   \Big(1+\frac{c}{\log N}   \Big)\bigg( \frac{2}{N}\sum_{j=k}^{\lfloor \lambda_{c} N\rfloor-1}\frac{1}{ 1-  r_{j}} \,+\, (b-1)\sum_{j=\lfloor \lambda_{c} N\rfloor}^{\alpha_{N}-1}\widetilde{M}^{j}(X_{N})  \bigg)  \Bigg\}\,
\, .
\end{align*}
The partial sum $ \sum_{j=\lfloor \lambda_{c} N\rfloor}^{\alpha_{N}-1}\widetilde{M}^{j}(X_{N})   $ is uniformly bounded by a constant as a consequence of part (iii) since
$$\sum_{j=\lfloor \lambda_{c} N\rfloor}^{\alpha_{N}-1}\widetilde{M}^{j}(X_{N})   \,\leq \, \epsilon \log N \sup_{1\leq j\leq \alpha_{N}}\widetilde{M}^{j}(X_{N})  \,=\,\mathit{O}(1)\,.    $$
Moreover, by using Riemann sum approximations similar to those in part (i), we can see that the difference between $\frac{1}{N}\sum_{j=k}^{ \lfloor \lambda_{c} N\rfloor-1}\frac{1}{ 1-  r_{j}}  $ and  $\int_{r_{k}}^{r_{ \lfloor \lambda_{c} N\rfloor}}\frac{1}{1-s}ds$ is uniformly bounded by a constant: 
$$ \sum_{j=k}^{ \lfloor \lambda_{c} N\rfloor-1}\frac{1}{ 1-  r_{j}}\,-\,\int_{r_{k}}^{r_{ \lfloor \lambda_{c} N\rfloor}}\frac{1}{1-s}ds
\,=\,  \mathit{O}\Bigg(\frac{1}{\log N}\int_{r_{k}}^{r_{ \lfloor \lambda_{c} N\rfloor}}\frac{1}{1-s}ds\Bigg) \,=\,\mathit{O}\Bigg( \frac{\log\big(\frac{1}{1-r_{ \lfloor \lambda_{c} N\rfloor} } \big)}{\log N}  \Bigg)\,=\,\mathit{O}(1)\,. $$
The first equality above follows since the spacing between the $r_j$'s is $\frac{1}{N}+\mathit{O}\big(\frac{1}{N^2(1-r_{j})}  \big)=\frac{1}{N}+\mathit{O}\big(\frac{1}{N\log N}  \big)$ by~(\ref{Hable}), and the third equality is by~(\ref{Lavae})   Hence there is a  $C>0$ such that   
\begin{align}
\frac{d}{dx}\widetilde{M}^{\alpha_{N}-k}(x)  \Big|_{x =\widetilde{M}^{k}(X_{N})}\leq & \,C\exp\bigg\{   2 \Big(1+\frac{c}{\log N}   \Big) \int_{r_{k}}^{r_{ \lfloor \lambda_{c} N\rfloor}} \frac{1}{1-s}ds    \bigg\}\nonumber  \\
  = & \,C\exp\bigg\{  2  \Big(1+\frac{c}{\log N }   \Big)\Big(\log( 1-r_{k} ) -\log( 1-r_{ \lfloor \lambda_{c} N\rfloor} )    \Big)       \bigg\}\nonumber \\
 \leq & \,C'\frac{ ( 1-r_{k} )^2 }{ ( 1-r_{ \lfloor \lambda_{c} N\rfloor} )^2 }   \nonumber \\
\leq &\,C''\Big(\frac{1}{\log N }\Big)^2\frac{1}{\big(\widetilde{M}^k(X_{N})  \big)^2 }
\, .\label{Jakdah}
\end{align}
In the last inequality, we have also used that $  (1- r_{k}  )^2    =   \kappa_{b}^{2} \big(\widetilde{M}^k(X_{N})N \big)^{-2}  $
and that  the factor $(1-r_{\lfloor \lambda_{c} N\rfloor})^{-1}$  is bounded by a constant multiple of $N/\log N$ by the analysis in the proof of part (i).

\end{proof}

\begin{remark} Recall that $\ell_{x}:=\log\big(\frac{1+x}{\log(1+x)}\big)$.  The results i, iii, and iv of Lemma~\ref{LemInductionStep} can be equivalently stated in terms of $\ell_x$ as follows:

\begin{enumerate}

\item[i)] As $N\rightarrow \infty$,
$$   M^{\lfloor \lambda_{c} N \rfloor }\big(X_{N}    \big)   \,  =\, \frac{\kappa_{b}^2}{\eta_{b}}\frac{1}{\ell_N}\,+\,\mathit{O}\Big(\frac{1 }{|\ell_N|^2} \Big) \, .$$

\item[iii)]  There is a  constant $C>0$ such that for all $N\in \mathbb{N}$
$$  \,C^{-1} \frac{1}{\ell_N} \,  \leq  \, M^{\lfloor \lambda_{c} N+\epsilon\ell_N \rfloor}\big(X_{N}\big)    \,  \leq  \,C \frac{1}{\ell_N } \, .$$

\item[iv)]  Define $\alpha_{N}:=  \lfloor \lambda_{c} N+\epsilon \ell_N\rfloor $.   There is a $C>0$ such that for all $N\in \mathbb{N}$ and $k\leq \alpha_{N}$
$$ \displaystyle
 \frac{d}{dx} M^{\alpha_{N}-k}(x )\Big|_{x=M^{k}(X_{N}) }   \, \leq \,  C\Big(\frac{1}{\ell_N  }\Big)^2\frac{1}{\big|M^k(X_{N})  \big|^2}  \, . $$

\end{enumerate}

\end{remark}

The following lemma states that when $\epsilon < \eta_b$ the variance of $ \sqrt{ \ell^{m}_n}W_{n}\big( \beta_{n, \epsilon}^{(m)}\big) $  converges to $\upsilon_{b}(\epsilon)$ as $n \to \infty$, which  we will need to prove the central limit theorem  in part (ii) of Theorem~\ref{ThmMain}. This convergence implies, in particular, part (i) of Theorem~\ref{ThmMain} in the $\epsilon< \eta_{b}$ case.

\begin{lemma}\label{LemVarConv} Fix $m\in \mathbb{N}$ and $\epsilon< \eta_{b}$.  Then
\[   \lim_{n \to \infty}  \ell^{m}_{n} \varrho_{n}\big( \beta_{n, \epsilon}^{(m)} \big)  = \upsilon_{b}(\epsilon) = \frac{\kappa_b^2}{\eta_b - \epsilon}\,.   \]
\end{lemma}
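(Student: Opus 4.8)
The plan is to express $\varrho_n(\beta_{n,\epsilon}^{(m)})$ as $M^n(\varrho_0(\beta_{n,\epsilon}^{(m)}))$ using the recursion $\varrho_{k+1} = M(\varrho_k)$ established just before Section~\ref{SecVariance}, and then to iterate Lemma~\ref{LemInductionStep} $m$ times, each application advancing the variance through one more logarithmic scale. Concretely, by Remark~\ref{RmkExp} the initial variance $\varrho_0(\beta_{n,\epsilon}^{(m)})$ has the form $\kappa_b^2/n + (\kappa_b^2/n^2)(\eta_b\sum_{j=1}^{m-1}\ell_n^j + \epsilon\ell_n^m) + O(1/n^2)$. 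The key observation is that this is exactly the hypothesis~\eqref{Assump} of Lemma~\ref{LemInductionStep} with $N=n$ and $\lambda_c = 1$ — the lower-order terms involving $\ell_n^j$ are $O(1/n^2)$ and get absorbed into the error — so at the first level we may apply part (i) with $\lambda_c=1$ after peeling off $\lfloor n\rfloor$ iterates. Wait: we need all $n$ iterates, so the bookkeeping must be arranged so that the leftover count of iterates at each stage is a $\Theta$ of the next scale; I would set this up by writing $n = \lfloor\lambda_c N\rfloor + (\text{remainder})$ with $N$ chosen appropriately at each stage, or more cleanly, apply part (iii) iteratively.

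The cleaner route is the following inductive scheme on the number of remaining log-scales. After applying Lemma~\ref{LemInductionStep}(i) with $\lambda_c=1$, $N=n$, we learn $M^{\lfloor n\rfloor}(\varrho_0) = (\kappa_b^2/\eta_b)(1/\ell_n) + O(1/|\ell_n|^2)$, but that consumes all $n$ iterates at once and does not directly see the $\epsilon\ell_n^m$ correction. So instead I would keep careful track: set $X_N := \varrho_0(\beta_{n,\epsilon}^{(m)})$, and note that the number of iterates we must apply is exactly $n$. The construction of $\beta_{n,\epsilon}^{(m)}$ is rigged (this is the content of the $\sum_{k=1}^{m-1}\ell_n^k + \epsilon\ell_n^m$ term) so that $n$ iterates decompose as: roughly $n - \eta_b^{-1}\ell_n$-ish iterates to climb from scale $1/n$ to scale $1/\ell_n$, then $\ell_n - \eta_b^{-1}\ell_n^2$-ish to climb to scale $1/\ell_n^2$, and so on, with the very last block of length $\epsilon\,\ell_n^{m}$-ish (in place of $\eta_b\ell_n^m$) producing the final answer $\kappa_b^2/(\eta_b-\epsilon)\cdot 1/\ell_n^m$ by Lemma~\ref{LemInductionStep}(iii) rather than $1/\ell_n^m$ exactly. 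Thus I would prove by induction on $j = 0,1,\dots,m-1$ the statement: after $n - (\text{the sum of the first } j \text{ block lengths})$ iterates, the variance equals $(\kappa_b^2/\eta_b)(1/\ell_n^{\,j}) + O(1/|\ell_n^{\,j}|^2)$, using part (i) at the generic step (treating the current variance as $X_N$ with $N=\ell_n^{j-1}$-scale and $\lambda_c$ matched to the block length), and finishing with part (iii) on the last block to get the $\eta_b-\epsilon$ denominator. Then multiplying by $\ell_n^m$ gives $\kappa_b^2/(\eta_b-\epsilon) = \upsilon_b(\epsilon)$. The $\epsilon = \eta_b$ case is excluded here since $\beta_{n,\eta_b}^{(m)} = \beta_{n,0}^{(m+1)}$, consistent with the remark in Section~\ref{SecWeakDisorder}.

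The main obstacle I anticipate is making the iterate-counting bookkeeping rigorous: one must verify that the asymptotic identity~\eqref{varrho} for $\beta_{n,\epsilon}^{(m)}$ translates, level by level, into the precise hypothesis~\eqref{Assump} for the next application of Lemma~\ref{LemInductionStep}, including that the accumulated $O(1/N^2)$ error at each stage remains $O(1/N^2)$ after the map $M$ acts (this is why part (ii) and the derivative bound in part (iv) are needed — to propagate errors), and that the residual number of iterates at each stage is indeed $\Theta$ of the claimed block length so that parts (i) and (iii) apply with their stated $\lambda_c>0$ and $\epsilon\in(0,\eta_b)$. The substitution $N \mapsto \ell_n$ at each recursive step, together with the observation $\ell_{\ell_n} \sim \ell_n^2 = \log^2 n - \cdots$ (matching the $m$-fold composition $\ell^m$), needs to be checked so that the scales telescope correctly; the $\log$ versus $\ell$ discrepancy is exactly the $\log\log$ correction built into the definition of $\ell$, and verifying it closes the loop is the delicate part. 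Once the inductive claim is set up with the right indexing, each individual step is a direct citation of the appropriate part of Lemma~\ref{LemInductionStep}.
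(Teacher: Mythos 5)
Your overall strategy---climbing through the $m$ logarithmic scales by repeated use of Lemma~\ref{LemInductionStep}, with a final sub-critical block producing the constant $\kappa_b^2/(\eta_b-\epsilon)$---is indeed the skeleton of the paper's computation. However, two of your concrete claims are wrong at exactly the points where the content of the lemma lies. First, the correction terms in $\varrho_0\big(\beta_{n,\epsilon}^{(m)}\big)$ are of size $\kappa_b^2\big(\eta_b\sum_{j\le m-1}\ell^j_n+\epsilon\ell^m_n\big)/n^2\asymp \log n/n^2$, which is \emph{not} $O(1/n^2)$; if they really could be absorbed into the error of hypothesis~\eqref{Assump}, the limit could not depend on $\epsilon$ or $m$ at all, so this cannot be the mechanism. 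Your replacement bookkeeping is also off: by part (i), climbing from scale $\kappa_b^2/(\lambda_c N)$ costs about $\lambda_c N$ iterates, so the correct blocks are roughly $n-\eta_b\sum_{j=1}^{m-1}\ell^j_n-\epsilon\ell^m_n$, then $\eta_b\ell_n,\ \eta_b\ell^2_n,\dots,\eta_b\ell^{m-1}_n$, then $\epsilon\ell^m_n$; the lengths you propose (``$n-\eta_b^{-1}\ell_n$, $\ell_n-\eta_b^{-1}\ell^2_n$, \dots'') have $\eta_b^{-1}$ where $\eta_b$ belongs and do not sum to $n$. Second, part (iii) cannot ``produce the $\eta_b-\epsilon$ denominator'': as stated it only gives two-sided bounds with an unspecified constant $C$; the statement that yields the precise constant for a sub-critical block is part (ii), applied at scale $N\approx\ell^m_n$ with $\lambda=\epsilon<\lambda_c=\eta_b$.

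What is genuinely missing is the mechanism converting the $\eta_b\sum\ell^j_n+\epsilon\ell^m_n$ correction in the inverse temperature into the block structure---precisely the step you flag as ``the delicate part'' but leave unresolved. The paper does this by shifting the iteration count rather than perturbing the initial condition: it sets $\beta_n=\beta_{n,0}^{(1)}$ and $N_{n,\epsilon}^{(m)}=n+\lfloor\eta_b\sum_{j=1}^{m-1}\ell^j_n+\epsilon\ell^m_n\rfloor$, proves $\varrho_{N_{n,\epsilon}^{(m)}}(\beta_n)=\frac{\kappa_b^2}{\eta_b-\epsilon}\frac{1}{\ell^m_n}+O\big((\ell^m_n)^{-2}\big)$ by the induction you describe (parts (i) and (ii)), and then shows $\big|\varrho_n\big(\beta_{n,\epsilon}^{(m)}\big)-\varrho_{N_{n,\epsilon}^{(m)}}(\beta_n)\big|\le C(\ell^m_n)^{-2}$: near $\kappa_b^2/n$ each application of $M$ adds $\kappa_b^2/n^2+O(n^{-5/2})$, so the $N_{n,\epsilon}^{(m)}-n$ extra iterations reproduce the correction in~\eqref{varrho} up to $O(1/n^2)$, and this $O(1/n^2)$ discrepancy is pushed through $M^n$ via the chain rule and part (iv) applied once per scale, giving a derivative bound of order $n^2/(\ell^m_n)^2$. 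Your vague appeal to ``part (ii) and the derivative bound in part (iv) \dots to propagate errors'' does not substitute for this argument. Alternatively, your direct route could be rescued by absorbing the correction as a shift of the effective $N$ in~\eqref{Assump}: since $\kappa_b^2/(n-c_n)=\kappa_b^2/n+\kappa_b^2c_n/n^2+O\big((\log n)^2/n^3\big)$ with $c_n:=\eta_b\sum_{j\le m-1}\ell^j_n+\epsilon\ell^m_n$, one has $\varrho_0\big(\beta_{n,\epsilon}^{(m)}\big)=\kappa_b^2/N'+O(1/N'^2)$ with $N'=n-\lfloor c_n\rfloor$, after which the first block consumes $N'$ iterates and the remaining $c_n$ split as $\eta_b\ell_n+\dots+\eta_b\ell^{m-1}_n+\epsilon\ell^m_n$, with part (ii) at the last scale; but this accounting must actually be carried out, and as written your proposal neither performs it nor states it correctly.
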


\begin{proof} Recall that $k\mapsto \varrho_{k}\big(\beta_{n,\epsilon}^{(m)}\big)$ satisfies
the recursive relation $ \varrho_{k+1}\big(\beta_{n,\epsilon}^{(m)}\big) = M\big( \varrho_{k}(\beta_{n,\epsilon}^{(m)}) \big) $, and that the asymptotics of  $\varrho_{0}\big(\beta_{n,\epsilon}^{(m)}\big)=\textup{Var}\big(W_{0}\big( \beta_{n,\epsilon}^{(m)}  \big)\big)$ have the form~(\ref{varrho}).  Define $N_{n}^{(k)}\in \mathbb{N}$ for $k\leq m$  and   $ N_{n,\epsilon}^{(m)}\in \mathbb{N}$ as
$$ N_{n}^{(k)}\,:=\,  n  \, +\,  \left\lfloor  \eta_{b} \sum_{j=1}^{k-1}\ell^{j}_n  \right\rfloor \hspace{.8cm} \text{and} \hspace{.8cm} N_{n,\epsilon}^{(m)}\,:=\,  n  \, +\, \left\lfloor    \eta_{b} \sum_{j=1}^{m-1}\ell^{j}_n \,+\,\epsilon \ell^{m}_n \right\rfloor  .     $$
Define $\beta_{n}:=\beta_{n,0}^{(1)}= \kappa_{b}/\sqrt{n}-\tau\kappa_{b}^2/2n$.   The convergence of $\ell^{m}_{n} \varrho_{n}\big( \beta_{n, \epsilon}^{(m)} \big)$ to $\upsilon_{b}(\epsilon)$ is implied by the following statements, which are proved below.
\begin{enumerate}[I)]

\item  As $n \to \infty$
$$      \varrho_{N_{n, \epsilon}^{(m)}}(\beta_{n})   \, =\,\frac{  \kappa_{b}^2  }{\eta_{b}-  \epsilon }\frac{1}{\ell^{m}_n}+\mathit{O}\bigg( \frac{1}{(\ell^{m}_n)^2}\bigg)\, .    $$

\item   The difference between $\varrho_{n}\big(\beta_{n,\epsilon}^{(m)}\big)$ and $\varrho_{N_{n, \epsilon}^{(m)}}(\beta_{n})      $ has the bound
$$\Big|\varrho_{n}\big(\beta_{n,\epsilon}^{(m)}\big) \, -  \,  \varrho_{N_{n, \epsilon}^{(m)}}(\beta_{n})     \Big| \, \leq \,  \frac{C}{ ( \ell^{m}_n)^2 }\,  $$
for some $C>0$ and all $n>0$.

\end{enumerate}

\vspace{.3cm}

We prove (\textup{I}) by analyzing  $$ \varrho_{N_{n, \epsilon}^{(m)}}(\beta_{n}) \,=\, M^{N_{n, \epsilon}^{(m)} }\big( \rho_{0}(\beta_{n}) \big)=   M^{N_{n, \epsilon}^{(m)} }\big(\kappa_{b}^2/ n+\mathit{O}(n^{-2})\big)$$
 through an induction argument using Lemma~\ref{LemInductionStep}.  Note that the following asymptotic formula holds  for $r=1$ and $n\gg 1$  by an application of part (i) of  Lemma~\ref{LemInductionStep}
\begin{align}\label{DoInd}
\varrho_{N_{n}^{(r)}}(\beta_{n}) \,=\,M^{N_{n}^{(r)}}\bigg(\frac{\kappa_{b}^2}{n}+\mathit{O}\Big(\frac{1}{n^{2}}\Big)\bigg)\, = \, \frac{\kappa_{b}^2}{\eta_{b}}\frac{1}{\ell^{r}_n}+\mathit{O}\bigg( \frac{1}{(\ell^{r}_n)^2}\bigg)\,.
\end{align}
   Moreover, if~(\ref{DoInd}) holds for some $1\leq r< m$, then it also must hold for $r+1$ since
\begin{align*}
M^{N_{n}^{(r+1)}}\bigg(\frac{\kappa_{b}^2}{n}+\mathit{O}\Big(\frac{1}{n^{2}}\Big)\bigg)\, =&  \,M^{N_{n}^{(r+1)}-N_{n}^{(r)}}\Bigg(M^{N_{n}^{(r)}}\bigg(\frac{\kappa_{b}^2}{n}+\mathit{O}\Big(\frac{1}{n^{2}}\Big)\bigg)\Bigg)\\ = &M^{N_{n}^{(r+1)}-N_{n}^{(r)}}\Bigg( \frac{\kappa_{b}^2}{\eta_{b}}\frac{1}{\ell^{r}_n}+\mathit{O}\bigg( \frac{1}{(\ell^{r}_n)^2}\bigg)\Bigg) \, ,
\intertext{and by part (i) of  Lemma~\ref{LemInductionStep} the above is equal to}
  = &\frac{\kappa_{b}^2}{\eta_{b}}\frac{1}{\ell^{r+1}_n}+\mathit{O}\bigg( \frac{1}{ (\ell^{r+1}_n)^2}\bigg)\, .
\end{align*}
Therefore,  (\ref{DoInd}) holds for all $1\leq r\leq m$.   We can apply part (ii) of  Lemma~\ref{LemInductionStep}  and the same reasoning as above to conclude that
\begin{align*}
M^{N_{n,\epsilon}^{(m)}}\bigg(\frac{\kappa_{b}^2}{n}+\mathit{O}\Big(\frac{1}{n^{2}}\Big)\bigg)\, =\,M^{N_{n,\epsilon}^{(m)}-N_{n}^{(m)}}\Bigg(M^{N_{n}^{(m)}}\bigg(\frac{\kappa_{b}^2}{n}+\mathit{O}\Big(\frac{1}{n^{2}}\Big)\bigg)\Bigg)\, =\,\frac{\kappa_{b}^2}{\eta_{b}-\epsilon  }\frac{1}{\ell^{m}_n}\,+\,\mathit{O}\bigg( \frac{1}{(\ell^{m}_n)^2}\bigg)\, .
\end{align*}

\vspace{.4cm}

For (\textup{II}) notice that the terms $\varrho_{n}\big(\beta_{n,\epsilon}^{(m)}\big) $ and $\varrho_{N_{n, \epsilon}^{(m)}}(\beta_{n})$ can be written as
\begin{align}\label{FirRho}
\varrho_{n}\big(\beta_{n,\epsilon}^{(m)}\big) \,= \,M^{n}\big( \varrho_{0}\big(\beta_{n,\epsilon}^{(m)}\big)\big)\,, \quad 
 \varrho_{N_{n, \epsilon}^{(m)}}(\beta_{n})    \,=\, M^{n}\big(\varrho_{N_{n, \epsilon}^{(m)}-n}(\beta_{n})\big)     \,.
\end{align}
 Moreover, it can be shown that 
\begin{align}\label{Frunkle}
\Big| \varrho_{0}\big(\beta_{n,\epsilon}^{(m)}\big) \,-\, \varrho_{N_{n, \epsilon}^{(m)}-n}(\beta_{n})      \Big| \,\leq \,      \frac{C }{n^2}
\end{align}
for some $C>0$.  To see~(\ref{Frunkle}) first recall that by~(\ref{varrho})
\begin{align}\label{Boo}
\varrho_{0}\big(\beta_{n,\epsilon}^{(m)}\big)\,=\,\frac{\kappa_{b}^2}{n}\,+\, \frac{\kappa_{b}^2   }{ n^2  } \big( N_{n, \epsilon}^{(m)}-n\big)\,+\,\mathit{O}\Big(\frac{1}{n^{2}}\Big)\,.
\end{align}
Secondly, a linearization  of the map $M(x)$ around $x=\kappa_{b}^2/n$ yields that
\begin{align*}
\bigg|M\Big(\frac{\kappa_{b}^2}{n}+\Delta x\Big)\, -\,\Big(\frac{\kappa_{b}^2}{n}\,+\,\Delta x\Big)\,-\,\frac{\kappa_{b}^2}{n^2} \bigg|  \,\leq \, \frac{c}{n^{\frac{5}{2}}}
\end{align*}
for some $c>0$ and all $\Delta x $ in the range $[0,n^{-\frac{3}{2}}]$.  If $u_{n}\in \mathbb{N}$ is  the  first value $j=u_{n}$ such that $ \varrho_{j}(\beta_{n})=  M^{j}\big(\varrho_{0}(\beta_{n})\big)>n^{-\frac{3}{2}}$, then for $1\leq k\leq u_{n}$ a telescoping sum gives us
\begin{align}
\bigg|\varrho_{k}(\beta_{n})\,-\, \frac{\kappa_{b}^2}{n} \, -\, \frac{\kappa_{b}^2}{n^2} k\bigg| \,\leq \,&\Big| \varrho_{0}(\beta_{n})  -\frac{\kappa_{b}^2}{n} \Big|\,+\,\sum_{j=1}^{k  }\bigg|  M^{j}\big(\varrho_{0}(\beta_{n})\big) \,-\,  M^{j-1}\big(\varrho_{0}(\beta_{n})\big) \,-\, \frac{\kappa_{b}^2}{n^2}  \bigg| \nonumber \\
\,= \,&\mathit{O}\Big( \frac{1}{n^2}  \Big)\,+\,\sum_{j=1}^{k  }\bigg|  M\big(M^{j-1}\big(\varrho_{0}(\beta_{n})\big)\big) \,-\,  M^{j-1}\big(\varrho_{0}(\beta_{n})\big) \,-\, \frac{\kappa_{b}^2}{n^2}  \bigg| \nonumber \\  \,\leq \, &\mathit{O}\Big( \frac{1}{n^2}  \Big)\,+\, \frac{c}{n^{\frac{5}{2}}} k\,.\label{PreDimpere}
\end{align}
From~(\ref{PreDimpere}) we can see that $u_n> N_{n, \epsilon}^{(m)}-n$ for large enough $n$, and thus
\begin{align}
\bigg|\varrho_{N_{n, \epsilon}^{(m)}} ( \beta_{n})    \,-\, \frac{\kappa_{b}^2}{n} \, -\, \frac{\kappa_{b}^2}{n^2}  \big( N_{n, \epsilon}^{(m)}-n\big)\bigg| \,\leq \,\mathit{O}\Big( \frac{1}{n^2}  \Big)\,+\,\frac{c}{n^{\frac{5}{2}}}\big(N_{n, \epsilon}^{(m)}-n  \big)\,= \, \mathit{O}\Big( \frac{1}{n^2}  \Big)  \,.\label{Dimpere}
\end{align}
 Combining~(\ref{Boo}) with~(\ref{Dimpere}) implies~(\ref{Frunkle}). Now by \eqref{FirRho} we have the equality
\begin{align*}
\bigg|\varrho_{n}\big(\beta_{n,\epsilon}^{(m)}\big) \, -  \,  \varrho_{N_{n, \epsilon}^{(m)}} ( \beta_{n})     \bigg| \, =\,&\bigg|M^{n}\Big( \varrho_{0}\big(\beta_{n,\epsilon}^{(m)}\big) \Big) - M^{n}\big(\varrho_{N_{n, \epsilon}^{(m)}-n}(\beta_{n})\big)     \bigg| \\
 \leq \,  &  \Big|  \varrho_{0}\big(\beta_{n,\epsilon}^{(m)}\big)  \,-\, M^{N_{n, \epsilon}^{(m)}-n}\big(\varrho_{0}(\beta_{n})\big)   \Big|\,  \frac{d}{dx}M^{n}(x)\Big|_{x=\textup{max}\big( \varrho_{0}\big(\beta_{n,\epsilon}^{(m)}\big) ,\, \varrho_{N_{n, \epsilon}^{(m)}-n}(\beta_{n})\big)  }\,.
\end{align*}
The inequality uses that the derivative of $M^{n}(x)$ is increasing.  The first term above can be bounded by~(\ref{Frunkle}).   By choosing any $\widehat{\epsilon}\in (\epsilon, \eta_{b})$, the term $ \varrho_{0}\big(\beta_{n,\epsilon}^{(m)}\big) $ will be smaller than $\varrho_{N_{n, \epsilon}^{(m)}-n}(\beta_{n}) $ for large enough $n$ as a consequence of~\eqref{Frunkle}, so
\begin{align*}
\big|\varrho_{n}\big(\beta_{n,\epsilon}^{(m)}\big) \, -  \,  \varrho_{N_{n, \epsilon}^{(m)}} ( \beta_{n})     \big| \leq \,  &  \frac{C}{n^2}\,   \frac{d}{dx}M^{n}(x)\Big|_{x= \varrho_{N_{n, \epsilon}^{(m)}-n}(\beta_{n}) }\,.
\end{align*}
To complete the proof of (II) we need to show that the derivative above is bounded by a constant multiple of $n^2/(\ell_{n}^{m})^2$ by writing $M^{n}(x)$ as
\begin{align*}
M^{n}(x)\,=\,M^{N_{n,\widehat{\epsilon}}^{(m)}-N_{n}^{(m-1)}   }\circ \cdots  \circ M^{N_{n}^{(2)} -N_{n}^{(1)}}\circ M^{N_{n}^{(1)}-(N_{n,\widehat{\epsilon}}^{(m)}-n)}(x   )\,.
\end{align*}
The chain rule gives us
\begin{align*}
\frac{d}{dx}M^{n}(x)&\Big|_{x=\varrho_{N_{n, \epsilon}^{(m)}-n}(\beta_{n}) }\\  \,=\,&\frac{d}{dx}M^{N_{n,\widehat{\epsilon}}^{(m)}-N_{n}^{(m-1)}   }(x)\Big|_{x= \varrho_{N_{n}^{(m-1)}}(\beta_{n}) } \frac{d}{dx}M^{N_{n}^{(m-1)}-N_{n}^{(m-2)}   }(x)\Big|_{x= \varrho_{N_{n}^{(m-2)}}(\beta_{n}) } \\ & \cdots  \frac{d}{dx}M^{N_{n}^{(2)}-N_{n}^{(1)}   }(x)\Big|_{x= \varrho_{N_{n}^{(1)}}(\beta_{n}) }\frac{d}{dx} M^{N_{n}^{(1)}-(N_{n,\widehat{\epsilon}}^{(m)}-n)}(x   )\Big|_{x= \varrho_{N_{n, \epsilon}^{(m)}-n}(\beta_{n}) }\,.
\end{align*}
With the asymptotics~(\ref{DoInd}) for  $\varrho_{N_{n}^{(r)}}(\beta_{n})$ in hand, we can apply (iv) of Lemma~\ref{LemInductionStep} to the $m$ derivatives above to get that
$$ \frac{d}{dx}M^{n}(x)\Big|_{x= M^{N_{n, \widehat{\epsilon}}^{(m)}-n}\big(\varrho_{0}(\beta_{n})\big) }\,\leq \,C^m \Big(\frac{1}{\ell_n^m  }\Big)^2\frac{1}{|\varrho_{N_{n}^{(m-1)}(\beta_{n})}  |^2} \cdots \Big(\frac{1}{\ell_n^{2}  }\Big)^2\frac{1}{|\varrho_{N_{n}^{(1)}}(\beta_{n})  |^2}\Big(\frac{1}{\ell_n  }\Big)^2\frac{1}{|\varrho_{N_{n, \widehat{\epsilon}}^{(m)}-n}(\beta_{n})  |^2}\,. $$
The right side above contracts to a multiple of $n^2/(\ell_{n}^{m})^2$ through a telescoping product.

\end{proof}

\subsection{Variance explosion when $\epsilon>\eta_{b}$}\label{SecExplode}

In this section we assume $\epsilon > \eta_b$ and define
\[ \alpha_{n,\epsilon}^{(m)}:=\big\lfloor \ell^{(m)}_n\, (\epsilon -\eta_{b})\big\rfloor. \]
The following lemma is a straightforward application of Lemma~\ref{LemVarConv}.

\begin{lemma}\label{LemCopy} Fix $m\in \mathbb{N}$ and $\epsilon> \eta_{b}$. Then
\begin{align*}
  \lim_{n \to \infty} \ell^{m+1}_n\varrho_{n-\alpha_{n,\epsilon}^{(m)}}\big( \beta_{n, \epsilon}^{(m)} \big) = \frac{6}{b+1}  \,.
\end{align*}

\end{lemma}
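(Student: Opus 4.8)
The plan is to reduce the claim to Lemma~\ref{LemVarConv} by reindexing the hierarchy. Write $n' := n - \alpha_{n,\epsilon}^{(m)}$, so that $n' \to \infty$ as $n \to \infty$ and $\alpha_{n,\epsilon}^{(m)} = \lfloor \ell^m_n (\epsilon - \eta_b)\rfloor$. The quantity $\varrho_{n'}\big(\beta_{n,\epsilon}^{(m)}\big) = M^{n'}\big(\varrho_0(\beta_{n,\epsilon}^{(m)})\big)$ is the variance obtained by running the recursion $M$ only $n'$ times starting from the level-zero variance at inverse temperature $\beta_{n,\epsilon}^{(m)}$. The key observation is that $\beta_{n,\epsilon}^{(m)}$, as a function of $n$, is essentially of the same type as the critical scaling $\beta_{n', 0}^{(m+1)}$ viewed as a function of $n'$: both are $\kappa_b/\sqrt{\cdot} - \tau\kappa_b^2/(2\cdot) + $ a correction of order $(\log)$-iterates over $(\cdot)^{3/2}$, and the removed layers $\alpha_{n,\epsilon}^{(m)}$ are of order $\ell^m_n \approx \log^{m-1}(\log n)$, hence negligible compared to $n$; so $n' = n(1+o(1))$ and $\ell^j_{n'} = \ell^j_n + o(1)$ for each fixed $j \le m+1$.

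Concretely, I would proceed as follows. First, using Remark~\ref{RmkExp} (the asymptotic expansion~(\ref{varrho})), compute $\varrho_0\big(\beta_{n,\epsilon}^{(m)}\big) = \kappa_b^2/n + (\kappa_b^2/n^2)\big(\eta_b\sum_{j=1}^{m-1}\ell^j_n + \epsilon\,\ell^m_n\big) + \mathit{O}(n^{-2})$. Rewriting $\epsilon\,\ell^m_n = \eta_b\,\ell^m_n + (\epsilon-\eta_b)\ell^m_n$ and absorbing the removed-layer count, one checks that $\varrho_0\big(\beta_{n,\epsilon}^{(m)}\big)$ agrees, up to $\mathit{O}(1/n'^2) = \mathit{O}(1/n^2)$, with $\varrho_{N^{(m+1)}_{n'}-n'}(\beta_{n'})$ for the base scaling $\beta_{n'} = \kappa_b/\sqrt{n'} - \tau\kappa_b^2/(2n')$; this is the analogue of step (II)'s estimate~(\ref{Frunkle}) in the proof of Lemma~\ref{LemVarConv}. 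The point is that running $M$ a further $n'$ times from $\varrho_0(\beta_{n,\epsilon}^{(m)})$ is, up to a controlled error, the same as running $M$ a total of $N^{(m+1)}_{n'} = n' + \lfloor \eta_b\sum_{j=1}^{m}\ell^j_{n'}\rfloor$ times from $\kappa_b^2/n'$. Applying the induction argument of part (I) of Lemma~\ref{LemVarConv} (iterating part (i) of Lemma~\ref{LemInductionStep}, now $m$ times to reach the $(m+1)$-st scale, which is exactly the $\epsilon=\eta_b$, i.e. $\beta_{n',0}^{(m+1)}$, case), we get $\varrho_{N^{(m+1)}_{n'}}(\beta_{n'}) = \kappa_b^2/(\eta_b\,\ell^{m+1}_{n'}) + \mathit{O}(1/(\ell^{m+1}_{n'})^2) = \upsilon_b(0)/\ell^{m+1}_n + o(1/\ell^{m+1}_n)$, since $\kappa_b^2/\eta_b = 6/(b+1)$ and $\ell^{m+1}_{n'} = \ell^{m+1}_n(1+o(1))$. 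The error transfer between $\varrho_{n'}(\beta_{n,\epsilon}^{(m)})$ and $\varrho_{N^{(m+1)}_{n'}}(\beta_{n'})$ is handled exactly as in step (II) of Lemma~\ref{LemVarConv}: bound the difference of the initial data by $C/n^2$, then multiply by $\frac{d}{dx}M^{n'}(x)$ evaluated at the larger argument, and use the chain-rule/telescoping-product estimate built from part (iv) of Lemma~\ref{LemInductionStep} to see that this derivative is $\mathit{O}(n^2/(\ell^{m+1}_n)^2)$, so the difference is $\mathit{O}(1/(\ell^{m+1}_n)^2)$, which is negligible after multiplying by $\ell^{m+1}_n$.

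The main obstacle I anticipate is bookkeeping rather than a new idea: one must verify carefully that subtracting the $\alpha_{n,\epsilon}^{(m)} \asymp \ell^m_n$ layers does not disturb any of the asymptotic constants — in particular that $n'/n \to 1$, that each iterated logarithm $\ell^j$ is stable under $n \mapsto n'$ with only $o(1)$ error, and that the $\mathit{O}$-terms in~(\ref{varrho}) and in Lemma~\ref{LemInductionStep} remain uniformly controlled after the reindexing. Once that is pinned down, the result is precisely Lemma~\ref{LemVarConv} with $\epsilon = \eta_b$ (equivalently $\beta_{n,0}^{(m+1)}$) applied at system size $n'$ instead of $n$, giving the limiting variance $\upsilon_b(0) = 6/(b+1)$.
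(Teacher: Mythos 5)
Your proposal is correct and follows essentially the same route as the paper: rewrite $\beta_{n,\epsilon}^{(m)}$ in terms of the reduced size $n'=n-\alpha_{n,\epsilon}^{(m)}$, observe that up to an $\mathit{O}(n^{-3/2})$ correction it coincides with $\beta_{n',0}^{(m+1)}$ (using $\beta_{n,\eta_b}^{(m)}=\beta_{n,0}^{(m+1)}$ and $\ell^{j}_{n'}=\ell^{j}_n+o(1)$), and then invoke Lemma~\ref{LemVarConv} at system size $n'$ with limiting variance $\upsilon_b(0)=6/(b+1)$. The paper's proof is just a terser version of this, leaving the error-transfer bookkeeping (which you carry out via the step (II) machinery of Lemma~\ref{LemVarConv}) implicit.
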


\begin{proof}This follows from Lemma~\ref{LemVarConv} by replacing the system size, $n$, by $n-\alpha_{n,\epsilon}^{(m)}$ since $\ell^{m+1}_n \approx \ell^{m+1}_{n-\alpha_{n,\epsilon}^{(m)}}$ and
\begin{align*}
\beta_{n, \epsilon}^{(m)}\,:=\,&\frac{ \kappa_{b} }{\sqrt{n}} \,-  \,\frac{ \tau\kappa_{b}^2 }{2n}  \,+\, \frac{ \kappa_{b}}{2n^{\frac{3}{2}}}\bigg( \eta_{b} \sum_{k=1}^{m-1}   \ell^{k}_{n} \,+\, \epsilon \ell^{m}_{n}\bigg)\\ \, =\,&\frac{ \kappa_{b} }{(n-\alpha_{n,\epsilon}^{(m)})^{\frac{1}{2}}  } \,-  \,\frac{ \tau\kappa_{b}^2 }{2(n-\alpha_{n,\epsilon}^{(m)})}     \,+\, \frac{ \eta_{b} \kappa_{b}}{2\big(n-\alpha_{n,\epsilon}^{(m)}\big)^{\frac{3}{2}}} \sum_{k=1}^{m}    \ell^{k}_{n-\alpha_{n,\epsilon}^{(m)}}\,+\,\mathit{O}\Big( \frac{1}{n^{\frac{3}{2}}}  \Big)    \,,
\end{align*}
where we have used that $\beta_{n, \eta_{b}}^{(m)}=\beta_{n, 0}^{(m+1)}$.

\end{proof}

\begin{proof}[Proof of part (i) of Theorem~\ref{ThmMain} in the $\epsilon>\eta_{b}$ case]  Let $\widehat{M}(x) := x + \frac{b-1}{2} x^2$ and noticing that $M(x) \geq \widehat{M}(x)$ for $x \geq 0$ we have
 \begin{align}
\textup{Var}\big( W_{n}\big( \beta_{n, \epsilon}^{(m)}  \big)\big)\, =\, \varrho_ {n }\big(\beta_{n, \epsilon}^{(m)}\big)    \, =\, M^{\alpha_{n,\epsilon}^{(m)}}\Big(\varrho_ {n-\alpha_{n,\epsilon}^{(m)} }\big(\beta_{n, \epsilon}^{(m)}\big)\Big)\, \geq \,& \widehat{M}^{\alpha_{n,\epsilon}^{(m)}}\Big( \varrho_ {n-\alpha_{n,\epsilon}^{(m)} }\big(\beta_{n, \epsilon}^{(m)}\big)   \Big) \notag \\
\geq \,&\widehat{M}^{\alpha_{n,\epsilon}^{(m)}}\Big(\frac{ L  }{  \ell^{m}_n  }\Big)\,. \label{Dimp}
\end{align}
The last inequality holds for any fixed $L>0$ and large enough $n$  since $\varrho_{n-\alpha_{n,\epsilon}^{(m)} }\big(\beta_{n, \epsilon}^{(m)}\big)  \propto  1/\ell^{m+1}_n $ by Lemma~\ref{LemCopy}.  It will be convenient for us to choose  some $L$ greater than $\frac{16\epsilon}{(\epsilon -\eta_{b})(b-1)}$.
Writing $\widehat{M}(x)$ as $x\big(1+\frac{b-1}{2}x\big) $, it is clear that for any  $K>0$ and $r\in \mathbb{N}$:
\begin{align}\label{MKay}
  \ell^{m}_n  \widehat{M}^{r}\Big(\frac{ K  }{  \ell^{m}_n  }\Big)\,  > & \, K\bigg(1+\frac{b-1}{2} \frac{K}{\ell^{m}_n }   \bigg)^r  \,,\nonumber
\intertext{and as long as  $\frac{(b-1)K}{2\ell^{m}_n}$ is smaller than the solution $x>1$ to the equation    $1+x =\exp\{ x/2   \}  $, the above is larger than}
   >&\,   K\exp\left\{  \frac{r}{\ell^{m}_n }\frac{b-1}{4} K\right\} \,.
\end{align}

Notice that the sequence $a_{r}:=  \ell^{m}_n  \widehat{M}^{r}\big(\frac{ K  }{  \ell^{m}_n }\big)$ will be $\geq 2K$ before $r\in \mathbb{N}$ reaches the value $\lceil \frac{4\ell^{m}_n}{(b-1)K}  \rceil $.  Define the sequence $t_{j}\in \mathbb{N}$ by
$$ t_{j}\, :=\, \sum_{i=1}^{j} \left\lceil \frac{4\ell^{m}_n}{2^{i-1}(b-1)L}  \right\rceil  \, .  $$
By applying~(\ref{MKay}) inductively, we have that $  \ell^{m}_n  \widehat{M}_{n}^{t_{j}}\big(\frac{ L  }{  \ell^{m}_n  }\big)\,>\, L 2^{j}$, and thus
\begin{align}\label{Gobble}
\widehat{M}^{\alpha_{n,\epsilon}^{(m)}}\Big(\frac{ L  }{  \ell^{m}_n  }\Big)\, > \, 2^{N_{L,\epsilon}  }\frac{ L }{ \ell^{m}_n }\,,
\end{align}
where $N_{L,\epsilon}$ is the number of $ t_{j}$'s less than $\alpha_{n,\epsilon}^{(m)}$.   However,  $N_{L,\epsilon}$ grows   in rough proportion to $\ell^{m}_n $:
$$  \frac{N_{L,\epsilon}}{ \ell^{m}_n } \, \approx \, \frac{\alpha_{n,\epsilon}^{(m)}-  \frac{8}{(b-1)L}\ell^{m}_n   }{\ell^{m}_n } \,>\,\frac{1}{2\kappa_{b}^2}\left(1-\frac{\eta_{b}}{\epsilon}\right)\,,$$
where the second inequality uses  that $L\geq \frac{16\epsilon }{(\epsilon -\eta_{b})(b-1)}$. Hence, $2^{N_{L,\epsilon}  }\frac{ L}{ \ell^{m}_n }$ grows without bound as $n\rightarrow \infty$.   Combined with~(\ref{Dimp}) and~(\ref{Gobble}), this implies that
the variance of $ W_{n}\big( \beta_{n, \epsilon}^{(m)}  \big)$ goes to infinity.

\end{proof}

\section{The limit theorem \label{SecLimitThm}}

In this section we prove the central limit theorem in part (ii) of Theorem~\ref{ThmMain}. This requires extra notation for the diamond graph and  recursive formulae related to the normalized partition function \vspace{.3cm}

  For easy reference, we make the following notation list in which $k < n$:
\begin{align*}
&D_{n}   &   &  \text{$n^{th}$ diamond graph }            \\
&E_{n}   &   & \text{Set of edges on $D_{n}$}         \\
&E_{k}   &   & \text{Abusing notation, $E_{k} $ is identified with the set of copies of $D_{n-k}$ on $D_{n}$   }          \\
&a\Try g    &     & \text{$a\in E_{n}$ lies ``on" $g\in E_{k}$ }\\
&g\TS (i,j) &    &   \text{Refers to an element $E_{k+1}$ given $g\in E_{k}$ and $1\leq i,j\leq b$.  }
\end{align*}

The edge set contains $ |E_{n}|=b^{2n}  $ elements.  The inductive nature of the construction of the diamond graphs implies that there is a canonical one-to-one correspondence between the edge set $E_{n-k}$ and the set of copies of $D_{k}$ embedded in $D_{n}$.   Elements of $E_{1}$ can be labeled by $(i,j) \in \{1,\cdots,\bbf   \}\times \{1,\cdots, \bbf \}$, where $(i,j)$ refers to the $j^{th}$ segment on the $i^{th}$ branch.  Moreover,  we can label elements in $E_{k+1}$ using elements of $E_{k}$ via the correspondence
\begin{align*}
&E_{k+1} \, \equiv \, E_{k}\times \underbrace{(\{1,\cdots,\bbf   \}\times \{1,\cdots, \bbf \})}\,.\\
& \hspace{.7cm} \big(\text{Labels the edge set for a local copy of $D_{1}$}\big)
\end{align*}
Using this correspondence inductively, $E_{k}$ defines a partition of $E_{n}$ when $k<n$, and we will write $a\Try g$ for $a\in E_{n}$ and $g\in E_{k}$ when $a$ has the form $g\TS (i_{1},j_{1})\TS\cdots  (i_{n-k},j_{n-k})$ for some $1\leq i_{m},j_{m}\leq b$. 

\begin{remark}
For the remainder this article, $n\in \mathbb{N}$ will always refer to the size of the system and $k\in \mathbb{N}$ satisfies $k\leq n$.
\end{remark}

\begin{definition} Let $1\leq k \leq n$, $g\in E_{k}$, and $\beta >0$.

\begin{itemize}
\item  $W_{n}(\beta; g)$ is defined in analogy to  $W_{n}(\beta)$ except restricted to the diamond subgraph associated with $g\in E_{k}$.

\item  $ R_{n}(\beta; g)\, : =\,  W_{n}\big(\beta; g\big)-1  $

\item  $\displaystyle R_{k,n}(\beta)\, :=\,\frac{1}{b^{n-k}}\sum_{ g\in E_{n-k} }R_{n}(\beta; g)$
\end{itemize}

\end{definition}

Recall that $\mathbf{E}(\beta; g)$ is defined as in~(\ref{ExpWII}) for $g\in E_{n}$. Note that since $|E_{n-k}|=b^{2(n-k)}$ and the $R_{n}(\beta; g)$ are i.i.d.\  (as indexed by $g$) we have \[\textup{Var}\big( R_{k,n}(\beta)\big) \, = \, \textup{Var}\big(  R_{n}(\beta; g)\big)\,=:\,\varrho_{k}(\beta).\]

\begin{lemma} \label{LemSimple} Let $g\in E_{k}$ for $1\leq k\leq n$.
\begin{enumerate}[(i)]

\item  The family of random variables $R_{n}(\beta; g)$ satisfies the recursive relation
\begin{align*}
R_{n}(\beta; g)\, =\,  \frac{1}{b}\sum_{i=1}^{b}\Bigg[\prod_{1\leq  j\leq b} \Big(1+ R_{n}\big(\beta; g\TS (i,j)\big)\Big)\,-\, 1\Bigg]
\end{align*}
with initial condition $R_{n}(\beta; g)= \mathbf{E}\big( \beta; g\big)-1$ for $g\in E_{n}$.

\item For  $0\leq j  < k\leq n$, the random variables $R_{j,n}( \beta )$ and  $R_{k,n}( \beta)- R_{j,n}( \beta)$ are uncorrelated.

\end{enumerate}

\end{lemma}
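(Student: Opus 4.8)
The plan is to prove the two claims of Lemma~\ref{LemSimple} separately, both of them flowing directly from the recursive structure of the diamond lattice.

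For part (i), I would start from the inductive definition of the diamond graph. The subgraph attached to $g \in E_k$ is itself a copy of $D_{n-k}$, and its top-level structure consists of $b$ branches, each branch being a series of $b$ copies of $D_{n-k-1}$; these sub-copies are exactly the edges $g\TS(i,j)$ for $1 \le i,j \le b$. Since $W_n(\beta;g)$ is defined as the normalized partition function restricted to that subgraph, the distributional recursion~(\ref{Induct}) holds \emph{as an identity of random variables} here (not merely in distribution), because the environment variables on the disjoint sub-copies are literally the pieces that compose the environment on $g$. That is, $W_n(\beta;g) = \frac{1}{b}\sum_{i=1}^b \prod_{j=1}^b W_n(\beta; g\TS(i,j))$. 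Substituting $W_n = 1 + R_n$ throughout and subtracting $1$ gives the stated recursion; the base case $k=n$ is immediate from the definition $W_n(\beta;g) = \mathbf{E}(\beta;g)$ for a single edge $g \in E_n$, since then $\Gamma$ consists of one trivial path. I would just need to be careful to note that when $g \in E_k$ with $k < n$, the objects $g\TS(i,j)$ indeed range over $E_{k+1}$ and the $R_n(\beta; g\TS(i,j))$ appearing are independent (they depend on disjoint edge sets of $E_n$).

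For part (ii), the key observation is that $R_{j,n}(\beta)$ is a function only of the environment, but more importantly that $R_{k,n}(\beta) - R_{j,n}(\beta)$ can be organized into a sum of centered increments. Writing $R_{j,n}(\beta) = \frac{1}{b^{n-j}}\sum_{h \in E_{n-j}} R_n(\beta;h)$ and using the partition of $E_{n-j}$-copies into $E_{n-k}$-copies (each $g \in E_{n-k}$ contains $b^{2(k-j)}$ sub-copies $h \in E_{n-j}$), I would group terms so that $R_{j,n}(\beta) = \frac{1}{b^{n-k}}\sum_{g \in E_{n-k}} \big(\frac{1}{b^{2(k-j)}}\sum_{h \Try g} R_n(\beta;h)\big)$. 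The difference $R_{k,n}(\beta) - R_{j,n}(\beta)$ is then $\frac{1}{b^{n-k}}\sum_{g}\big(R_n(\beta;g) - \text{avg of } R_n(\beta;h) \text{ over } h\Try g\big)$. Now the crucial point: for a fixed $g$, the quantity $R_n(\beta;g) - \frac{1}{b^{2(k-j)}}\sum_{h\Try g}R_n(\beta;h)$ is a function of the environment on the subgraph $g$ only, and $R_{j,n}(\beta)$ restricted... actually the cleaner route is to compute the covariance directly: $\mathbb{E}[R_{j,n}(\beta)(R_{k,n}(\beta)-R_{j,n}(\beta))] = \mathbb{E}[R_{j,n}R_{k,n}] - \mathbb{E}[R_{j,n}^2]$. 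Using independence of $R_n(\beta;h)$ for distinct $h \in E_{n-j}$ together with $\mathbb{E}[R_n(\beta;h)] = 0$ (which follows since $\mathbb{E}[W_n]=1$), one finds $\mathbb{E}[R_{j,n}R_{k,n}] = \frac{1}{b^{n-j}}\frac{1}{b^{n-k}}\sum_{h\in E_{n-j}}\sum_{h'\in E_{n-k}} \mathbb{E}[R_n(\beta;h)R_n(\beta;h')]$, and $\mathbb{E}[R_n(\beta;h)R_n(\beta;h')]$ vanishes unless $h \Try h'$, in which case it equals... here one must be careful, since if $h \Try h'$ the two variables are \emph{not} independent. The correct statement is that conditioning on the environment outside $h'$ and using the recursion, $\mathbb{E}[R_n(\beta;h') \mid \text{env on } h] $ relates to $R_n(\beta;h)$ via the tree-indexed martingale structure; concretely $\mathbb{E}[R_n(\beta;h) R_n(\beta;h')] = \mathbb{E}[R_n(\beta;h')^2] = \varrho_{j}(\beta)$ when $h' \Try h$...

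Let me restate the plan for (ii) more carefully rather than risk an error: I would introduce the $\sigma$-algebra $\mathcal{F}_j$ generated by the environment variables grouped at scale $j$ — or rather observe that $R_{j,n}(\beta) = \mathbb{E}[R_{k,n}(\beta) \mid \mathcal{G}_j]$ for an appropriate $\sigma$-algebra $\mathcal{G}_j \subseteq \mathcal{G}_k$, namely the one making the averaged-over-sub-copies structure a conditional expectation. Indeed, averaging $R_n(\beta;g)$ over the $b^{2(k-j)}$ sub-copies $h \Try g$ and over the randomness \emph{within} each sub-copy, while fixing the ``coarse'' data, is exactly a conditional expectation because of the recursion in part (i) (iterating it shows $R_n(\beta;g)$ has mean equal to the average of its sub-pieces after the appropriate conditioning, using $\mathbb{E}[\prod(1+R)] = \prod \mathbb{E}[1+R]$ by independence). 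Once $R_{j,n}(\beta) = \mathbb{E}[R_{k,n}(\beta)\mid \mathcal{G}_j]$ is established, the tower property gives $\mathbb{E}[R_{j,n}(R_{k,n}-R_{j,n})] = \mathbb{E}[\mathbb{E}[R_{j,n}(R_{k,n}-R_{j,n})\mid \mathcal{G}_j]] = \mathbb{E}[R_{j,n}\,\mathbb{E}[R_{k,n}-R_{j,n}\mid\mathcal{G}_j]] = \mathbb{E}[R_{j,n}\cdot 0] = 0$, which is precisely the uncorrelatedness claim.

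I expect the main obstacle to be part (ii): setting up the conditional-expectation (martingale) interpretation cleanly and verifying that the ``coarsening'' map $R_{k,n} \mapsto R_{j,n}$ really is a conditional expectation with respect to a sub-$\sigma$-algebra. This requires using part (i)'s recursion to see that averaging a product $\prod_{j}(1+R_n(\beta;g\TS(i,j)))$ over the internal randomness of the sub-copies factors through independence into $\prod_j \mathbb{E}[1+R_n(\beta;g\TS(i,j))\mid \cdots]$. Part (i) itself should be essentially immediate from unwinding the definitions of $W_n(\beta;g)$ and the diamond-graph construction, modulo the bookkeeping of the $g\TS(i,j)$ labels.
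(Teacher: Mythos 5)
Part (i) of your proposal is fine: since the sub-copies $g\TS(i,j)$ partition the edge set of the copy of $D_{n-k}$ attached to $g$, the recursion \eqref{Induct} indeed holds as an exact identity of random variables on that subgraph, and the base case is immediate. The problem is part (ii). The conditional-expectation identity you ultimately rest the proof on, $R_{j,n}(\beta)=\mathbb{E}[R_{k,n}(\beta)\mid\mathcal{G}_j]$ with $R_{j,n}$ being $\mathcal{G}_j$-measurable, is false in general for \emph{every} choice of $\mathcal{G}_j$: if such a $\sigma$-algebra existed, the tower property would force $\mathbb{E}[R_{k,n}\mid\sigma(R_{j,n})]=R_{j,n}$, and this already fails at $n=k=1$, $j=0$. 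Take $\omega=\pm1$ with probability $1/2$, so that the variables $X_{ij}:=\mathbf{E}(\beta;(i,j))-1$ are i.i.d.\ two-valued with mean zero; then $R_{1,1}-R_{0,1}=\frac1b\sum_i\sum_{|S|\geq 2}\prod_{j\in S}X_{ij}$, and on the event that $R_{0,1}$ takes its maximal value all $X_{ij}$ equal their positive value, so $\mathbb{E}[R_{1,1}-R_{0,1}\mid R_{0,1}]$ is strictly positive there. Only the covariance of $R_{k,n}-R_{j,n}$ against \emph{linear} functions of the level-$j$ data vanishes, which is precisely the statement to be proved; the martingale interpretation asserts strictly more and cannot be established. (The other natural candidate, $\mathcal{G}_j=\sigma(R_n(\beta;h):h\in E_{n-j})$, fails for the opposite reason: by the recursion in (i), $R_{k,n}$ is itself $\mathcal{G}_j$-measurable, so the conditional expectation returns $R_{k,n}$, not $R_{j,n}$.)

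The covariance route you abandoned is the correct one, but it needs two fixes. First, the regrouping of $R_{j,n}$ over copies $g\in E_{n-k}$ carries the weight $b^{-(k-j)}$, not $b^{-2(k-j)}$, since $b^{-(n-j)}=b^{-(n-k)}b^{-(k-j)}$. Second, for nested copies $h\Try g$ with $h\in E_{n-j}$, $g\in E_{n-k}$, the cross moment is $\mathbb{E}[R_n(\beta;h)R_n(\beta;g)]=b^{-(k-j)}\varrho_j(\beta)$, not $\varrho_j(\beta)$; with your tentative value the sums do not close. The missing computation is supplied by iterating (i): conditionally on the environment inside $h$, the sub-copies disjoint from $h$ have conditional mean-one weights, so at each of the $k-j$ levels only the branch containing $h$ survives the averaging and contributes a factor $1/b$, giving $\mathbb{E}[R_n(\beta;g)\mid \text{env in } h]=b^{-(k-j)}R_n(\beta;h)$; plugging this in yields $\mathbb{E}[R_{j,n}R_{k,n}]=\varrho_j(\beta)=\mathbb{E}[R_{j,n}^2]$, which is the claim. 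Equivalently, and perhaps most cleanly: expanding $R_n(\beta;g)$ via (i) as a multilinear polynomial in the i.i.d.\ mean-zero variables $R_n(\beta;h)$, $h\Try g$, its linear part is $b^{-(k-j)}\sum_{h\Try g}R_n(\beta;h)$, so the linear part of $R_{k,n}$ is exactly $R_{j,n}$, while every higher-degree monomial contains at least two distinct independent mean-zero factors and is therefore uncorrelated with each $R_n(\beta;h)$, hence with $R_{j,n}$. (The paper states this lemma without proof, treating it as routine, so there is no published argument to compare against; but as written your part (ii) does not go through.)
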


\begin{remark}\label{TrivyDivy}
In particular this recursion implies that $R_{n,n}(\beta; g) = W_n(\beta)$.
\end{remark}

\subsection{Proof of part (ii) of Theorem~\ref{ThmMain}}

We will need some control of the fourth moment of $ R_{n}( \beta; g)$ in terms of its second moment, in order to apply a Lindeberg condition in the proof of Theorem~\ref{ThmMain}.

\begin{lemma}\text{ }\label{LemFourth}For $g\in E_{n-k}$ define  $\varrho_{k,v}( \beta) \,:= \, \mathbb{E}\big[ ( R_{n}( \beta; g))^{v}\big]$.    For any $m,r\in \mathbb{N}$ and  $\epsilon <\eta_{b}$, there is a $C>0$ such that for all $n>0$ and $0<k\leq n$
$$\varrho_ {k,2r}\big(\beta_{n, \epsilon}^{(m)}\big)\, \leq  \, C \Big(\varrho_ {k,2}\big(\beta_{n, \epsilon}^{(m)}\big)\Big)^r \,.    $$

\end{lemma}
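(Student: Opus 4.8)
The plan is to prove the slightly stronger statement that for \emph{every} fixed $r\in\mathbb{N}$ there is a constant $C_r$ (depending on $r,b,m,\epsilon$ but not on $n,k$) with
$\varrho_{k,2r}(\beta_{n,\epsilon}^{(m)})\le C_r\,\varrho_{k,2}(\beta_{n,\epsilon}^{(m)})^{\,r}$ for all $n\ge 1$ and $0\le k\le n$, carrying all $r$ simultaneously so the recursion closes. Two preliminary reductions come first. By $M(x)\ge x$ on $\R^+$ the sequence $k\mapsto\varrho_k(\beta)=\varrho_{k,2}(\beta)=M^k(\varrho_0(\beta))$ is nondecreasing, so $\varrho_{k-1}\le\varrho_k$ throughout. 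By Lemma~\ref{LemVarConv}, $\varrho_n(\beta_{n,\epsilon}^{(m)})\to 0$, hence there is $n_0$ with $\varrho_k(\beta_{n,\epsilon}^{(m)})\le 1$ for all $n\ge n_0$ and all $k\le n$. For the finitely many pairs $(n,k)$ with $n<n_0$ every quantity $\varrho_{k,2r},\varrho_{k,2}$ is finite and strictly positive, so enlarging $C_r$ covers them; it remains to treat $n\ge n_0$, where we may freely use $\varrho_{k-1}\le\varrho_k\le 1$.

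For $n\ge n_0$ I would induct on $k$. In the base case $k=0$ we have $R_n(\beta;g)=\mathbf{E}(\beta;g)-1=e^{\beta\omega-\psi(\beta)}-1$ with $\psi(\beta)=\log\mathbb{E}[e^{\beta\omega}]$, and a binomial expansion gives $\varrho_{0,2r}(\beta)=\sum_{j=0}^{2r}\binom{2r}{j}(-1)^{j}e^{\psi(j\beta)-j\psi(\beta)}$. Writing $\psi(j\beta)-j\psi(\beta)=\sum_{\ell\ge 2}\frac{\kappa_\ell}{\ell!}\beta^\ell(j^\ell-j)$, the coefficient of $\beta^d$ in the power series for $e^{\psi(j\beta)-j\psi(\beta)}$ is a polynomial in $j$ of degree at most $d$, so all orders $d\le 2r-1$ are annihilated by the order-$2r$ finite difference $\sum_j\binom{2r}{j}(-1)^j(\cdot)$; hence $\varrho_{0,2r}(\beta)=\mathit{O}(\beta^{2r})$ while $\varrho_{0,2}(\beta)=\beta^2+\mathit{O}(\beta^3)$, so the ratio $\varrho_{0,2r}(\beta)/\varrho_{0,2}(\beta)^r$ stays bounded as $\beta\to 0$. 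Since $\beta_{n,\epsilon}^{(m)}\to 0$ is bounded and positive and this ratio is finite for each $n$, its supremum over $n\ge 1$ is a finite $C_r^{(0)}$.

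For the step $k-1\to k$ I would use Lemma~\ref{LemSimple}(i) to write $R_n(\beta;g)=\frac1b\sum_{i=1}^b S_i$ with $S_i=\prod_{j=1}^b(1+R_{ij})-1$ and the $R_{ij}$ i.i.d.\ with common moments $\varrho_{k-1,v}$. Convexity of $t\mapsto t^{2r}$ gives $\varrho_{k,2r}=\mathbb{E}\big[(\tfrac1b\sum_i S_i)^{2r}\big]\le\frac1b\sum_i\mathbb{E}[S_i^{2r}]=\mathbb{E}[S_1^{2r}]$, so it suffices to bound $\mathbb{E}[S_1^{2r}]$. Expanding $S_1=\sum_{\emptyset\neq A\subseteq\{1,\dots,b\}}\prod_{j\in A}R_{1j}$, raising to the $2r$-th power and taking expectations, the term indexed by $(A_1,\dots,A_{2r})$ equals $\prod_j\mathbb{E}[R_{1j}^{c_j}]$ with $c_j=\#\{l:j\in A_l\}$; it vanishes unless each $c_j\in\{0\}\cup\{2,3,\dots\}$, and in that case the induction hypothesis together with Cauchy--Schwarz for the odd exponents ($|\varrho_{k-1,2s+1}|\le(\varrho_{k-1,2s}\varrho_{k-1,2s+2})^{1/2}$) gives $|\varrho_{k-1,c}|\le \tilde C_c\,\varrho_{k-1}^{\,c/2}$ for $2\le c\le 2rb$. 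Since $\sum_j c_j=\sum_l|A_l|\ge 2r$ and $\varrho_{k-1}\le 1$, each surviving term is at most $(\max_c\tilde C_c)^{\,b}\varrho_{k-1}^{\,r}$, and there are at most $2^{2rb}$ tuples, so $\mathbb{E}[S_1^{2r}]\le C_r'\varrho_{k-1}^{\,r}\le C_r'\varrho_k^{\,r}=C_r'\varrho_{k,2}^{\,r}$, which closes the induction with constants independent of $k$ and $n$. The main obstacle is precisely the bookkeeping in this last step: one must check that after expansion every surviving monomial carries at least the $r$-th power of $\varrho_{k-1}$ — which is exactly where $\varrho_{k-1}\le 1$ is used, and hence why the reduction to $n\ge n_0$ is needed — and that the constant generated at each level does not grow with $k$ or depend on $n$.
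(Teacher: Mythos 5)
Your base case and your combinatorial expansion of $S_1^{2r}$ are fine, but the inductive step in $k$ does not close, and the gap is exactly at the point you wave at in your last sentence. After applying Jensen to get $\varrho_{k,2r}\le\mathbb{E}[S_1^{2r}]$, the expansion of $\mathbb{E}[S_1^{2r}]$ contains, among others, the $b$ ``pure'' tuples $A_1=\dots=A_{2r}=\{j\}$, which contribute $b\,\varrho_{k-1,2r}$. So even with optimal bookkeeping your step produces a bound of the form $\varrho_{k,2r}\le b\,\tilde C_{2r}\,\varrho_{k-1}^{\,r}+\dots$, i.e.\ the constant at level $k$ is at least $b$ times (and, as you wrote it, of order $2^{2rb}(\max_c\tilde C_c)^b$, hence a power of) the constant at level $k-1$. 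Since the recursion must be iterated up to $k=n$ times and $n\to\infty$, this yields a constant growing at least like $b^{k}$, which cannot be absorbed: $\varrho_k^r$ only improves from $\sim n^{-r}$ to $\sim(\ell^m_n)^{-r}$ along the way. The assertion that ``the constant generated at each level does not grow with $k$ or depend on $n$'' is therefore unsupported, and for the argument as written it is false. The culprit is Jensen's inequality itself: $\mathbb{E}[(\frac1b\sum_i S_i)^{2r}]\le\mathbb{E}[S_1^{2r}]$ throws away the factor $b^{-(2r-2)}$ coming from averaging $b$ independent, mean-zero blocks, and that factor is precisely what makes a $k$-uniform constant possible.

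The paper's proof keeps this averaging. Expanding $(\frac1b\sum_i S_i)^{t}$ with independent centered $S_i$ gives a recursion of the form $|\varrho_{k+1,t}|\le \frac{1}{b^{t-2}}|\varrho_{k,t}|+P\,|\varrho_{k,t}|+Q$, where $P$ is small because all second moments are uniformly $\mathit{O}(1/\ell^m_n)$ (Lemma~\ref{LemVarConv}) and $Q$ involves only moments of order $<t$, bounded by $C\varrho_{k,2}^{t/2}$ via a strong induction on the moment order $t$ (with Jensen/Cauchy--Schwarz only for the base case and odd orders). For $t\ge 3$ the coefficient $\frac{1}{b^{t-2}}+P$ is $\le\delta<1$, so iterating the affine recursion and using that $k\mapsto\varrho_{k,2}$ is nondecreasing gives $|\varrho_{k,t}|\le\delta^k|\varrho_{0,t}|+\frac{C}{1-\delta}\varrho_{k,2}^{t/2}$, with constants independent of $k$ and $n$; a stopping-time argument (your $\widehat k$-type exit time) then confirms the a priori smallness assumption for all $k\le n$. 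To repair your proposal you would have to reinstate this contraction, i.e.\ replace the Jensen step by the exact multinomial expansion over the $b$ branches and track the coefficient of $\varrho_{k-1,2r}$, at which point you are essentially reproducing the paper's argument; the rest of your structure (monotonicity of $\varrho_k$, smallness for $n\ge n_0$, Cauchy--Schwarz for odd moments, the $\mathit{O}(\beta^{2r})$ base case) is compatible with it.
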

\begin{proof}      Suppose for the purpose of a strong induction argument in $t=2,3,\cdots$  that there is a $c>0$ such that for all $k$, $n$, $v$ with $k\leq n$ and $v< t$
\begin{align}\label{Duzzle}
\big|\varrho_ {k,v}\big(\beta_{n, \epsilon}^{(m)}\big)\big|\, \leq  \, c \Big(\varrho_ {k,2}\big(\beta_{n, \epsilon}^{(m)}\big)\Big)^{\frac{v}{2}} \,.
\end{align}
Note that the base case $t=2$ of the above statement holds with $c=1$ by Jensen's inequality.  The recursive relation in part (i) of Lemma~\ref{LemSimple} implies that $k\mapsto \varrho_ {k, t}\big(\beta_{n, \epsilon}^{(m)}\big) $ obeys a recursive inequality of the form
\begin{align}
 \big|\varrho_ {k+1, t}\big(\beta_{n, \epsilon}^{(m)}\big) \big|\,\leq \,&\frac{1}{b^{t-2}}\big|\varrho_ {k, t}\big(\beta_{n, \epsilon}^{(m)}\big)\big|\,+\,
P\Big(  \big|\varrho_ {k, j}\big(\beta_{n,\epsilon}^{(m)}\big)\big| ;\, 2\leq j\leq t  \Big)  \big|\varrho_ {k, t}\big(\beta_{n, \epsilon}^{(m)}\big) \big| \nonumber \\  &\,+\,Q\Big(  \big|\varrho_ {k, j}\big(\beta_{n,\epsilon}^{(m)}\big)\big| ;\, 2\leq j\leq t -1     \Big)  \,, \label{Hugh}
\end{align}
where $P$ is a polynomial with no constant term and $Q\big(  \big|\varrho_ {k, j}\big(\beta_{n,\epsilon}^{(m)}\big)\big| ;\, 2\leq j\leq t -1     \big)$ is a linear combination of products
 $$\big|\varrho_ {k, j_{1}}\big(\beta_{n,\epsilon}^{(m)}\big)\big|\, \big|\varrho_ {k, j_{2}}\big(\beta_{n,\epsilon}^{(m)}\big)\big|\,\cdots \, \big|\varrho_ {k, j_{\ell}}\big(\beta_{n,\epsilon}^{(m)}\big)\big|$$
with $j_{i}< t$  and  $j_{1}+\cdots+j_{\ell}\geq t $.  Since the variable $\omega$ has finite exponential moments,  for $k=0$ we have  $$\big|\varrho_ {0, t}\big(\beta_{n, \epsilon}^{(m)}\big)\big| \,=\,\Big|\mathbb{E}\Big[\big( \exp\big\{\omega \beta_{n, \epsilon}^{(m)} \big\}-1   \big)^{t}\Big]\Big| =\mathit{O}\Big(\frac{1}{n^{\frac{t}{2}}}   \Big)$$ is small with large $n$.

If $ \big|\varrho_ {k, t}\big(\beta_{n, \epsilon}^{(m)}\big) \big|   \leq \lambda $ for $\lambda <1$,  the factor $P\big(  \big|\varrho_ {k, j}\big(\beta_{n,\epsilon}^{(m)}\big)\big| ;\, 2\leq j\leq t  \big)$ in~(\ref{Hugh}) has a bound of the form
\begin{align*}
P\Big(  \big|\varrho_ {k, j}\big(\beta_{n,\epsilon}^{(m)}\big)\big| ;\, 2\leq j\leq t  \Big)\,\leq \, c\Big(\lambda\,+\, \frac{1}{\ell^{m}_n}\Big)\,
\end{align*}
for some $c>0$ and all $\lambda<1$ and $n$.  The above holds by the induction assumption~(\ref{Duzzle}) and because $\varrho_ {n, 2}\big(\beta_{n, \epsilon}^{(m)}\big)$ is uniformly bounded by a constant multiple of $1/\ell^{m}_n$ for all $n\geq 1$  as a consequence of Lemma~\ref{LemVarConv}.  Pick $\lambda\ll 1$ and let $\widehat{k}\in \mathbb{N}$ be the smallest value such that $\big| \varrho_ {\widehat{k}, t}\big(\beta_{n, \epsilon}^{(m)}\big)\big|> \lambda $.  The term $Q\big(  \big|\varrho_ {k, j}\big(\beta_{n,\epsilon}^{(m)}\big)\big| ;\, 2\leq j\leq t -1     \big) $  is bounded by a constant multiple of $\big[\varrho_{k,2}\big(\beta_{n, \epsilon}^{(m)}\big)\big]^{t/2}$ by~(\ref{Duzzle}).   By the above considerations, there is an $\delta\in (0,1)$ and a $C>0$ such that for all $k<\widehat{k}$ and $n\in \mathbb{N}$
\begin{align}\label{BowWow}
 \big| \varrho_ {k+1, t}\big(\beta_{n, \epsilon}^{(m)}\big)\big| \, \leq  \, \delta  \varrho_ {k, t}\big(\beta_{n, \epsilon}^{(m)}\big)\,+\,C\big[\varrho_{k,2}\big(\beta_{n, \epsilon}^{(m)}\big)\big]^{\frac{t}{2}}\,.
\end{align}
   Using~(\ref{BowWow}) recursively,  it follows that for $k<\widehat{k}$
\begin{align}
 \big|\varrho_ {k, t}\big(\beta_{n, \epsilon}^{(m)}\big)\big| \, \leq  & \, \delta^{k}\varrho_ {0, t}\big(\beta_{n, \epsilon}^{(m)}\big)\,+\,C \sum_{j=0}^{k-1}\delta^{k-1-j}\big[\varrho_{j,2}\big(\beta_{n, \epsilon}^{(m)}\big)\big]^{\frac{t}{2}}\nonumber \\  \,\leq & \, \mathit{O}\Big(\frac{1}{n^{\frac{t}{2}}}   \Big)  \,+\,\frac{C}{1-\delta}\big[\varrho_{k,2}\big(\beta_{n, \epsilon}^{(m)}\big)\big]^{\frac{t}{2}}\,\leq C'\big[\varrho_{k,2}\big(\beta_{n, \epsilon}^{(m)}\big)\big]^{\frac{t}{2}}\,.
\end{align}
Since $\sup_{1\leq k\leq n}\varrho_{k,2}\big(\beta_{n, \epsilon}^{(m)}\big)=\varrho_{n,2}\big(\beta_{n, \epsilon}^{(m)}\big)\propto \frac{1}{\ell^{m}_{n}} $ is small with large $n$, it follows that $\widehat{k}>n$ when $n$ is large enough. 
Therefore, $\displaystyle \varrho_ {k, t}\big(\beta_{n, \epsilon}^{(m)}\big)$ is bounded by a multiple of $\big|\varrho_{k,2}\big(\beta_{n, \epsilon}^{(m)}\big)\big|^{\frac{t}{2}}$ for all $k$, $n$ with $k\leq n$, and the induction  step is complete.


\end{proof}

\vspace{.3cm}

\begin{proof}[Proof of part (ii) of Theorem~\ref{ThmMain}]  Let $(u_{n})_{n\geq 0}$ be a non-decreasing sequence of integers with $1\ll  u_{n} \ll  \ell^{m}_n   $.   Note that $  R_{ n}\big(\beta_{n, \epsilon}^{(m)}\big) $ and $R_{n ,n}\big(\beta_{n, \epsilon}^{(m)} \big)   $ are equal.   It suffices to prove:
\begin{enumerate}[I)]

\item $ \big( \ell^{m}_n \big)^{\frac{1}{2}} \Big[ R_{n ,n}\big(\beta_{n, \epsilon}^{(m)} \big) \, -\, R_{n-u_{n} ,n}\big(\beta_{n, \epsilon}^{(m)} \big)\Big] \hspace{.2cm} \stackrel{\mathcal{P}}{\Longrightarrow} \hspace{.2cm} 0\, $

\item  $\ds\big( \ell^{m}_n\big)^{\frac{1}{2}} R_{n-u_{n} ,n}\big(\beta_{n, \epsilon}^{(m)} \big) \hspace{.2cm} \stackrel{\mathcal{L}}{\Longrightarrow}  \hspace{.2cm}\mathcal{N}\big(0,\upsilon_{b}(\epsilon)\big)$\,

\end{enumerate}
The purpose of introducing $R_{n-u_{n} ,n}\big(\beta_{n, \epsilon}^{(m)} \big) $ as an approximation to $R_{n ,n}\big(\beta_{n, \epsilon}^{(m)} \big) $ is that $R_{n-u_{n} ,n}\big(\beta_{n, \epsilon}^{(m)} \big) $ is a sum of i.i.d.\ random variables--see~(\ref{Gabel}) below--and can be understood through the Lindeberg-Feller central limit theorem.\vspace{.5cm}

For (I)  note that by part (ii) of Lemma~\ref{LemSimple}, $R_{n-u_{n} ,n}\big(\beta_{n, \epsilon}^{(m)} \big)$ and   $R_{n ,n}\big(\beta_{n, \epsilon}^{(m)} \big) \, -\, R_{n-u_{n} ,n}\big(\beta_{n, \epsilon}^{(m)} \big)$ are uncorrelated, and thus
\begin{align}\label{Hornf}
  \mathbb{E}\bigg[\Big|   R_{ n ,n}\big( \beta_{n, \epsilon}^{(m)} \big)  \,  -\, R_{ n-u_{n} ,n}\big(\beta_{n, \epsilon}^{(m)}  \big)     \Big|^{2}\bigg]\,  =& \,\varrho_{n}\big(\beta_{n, \epsilon}^{(m)} \big)\,  - \, \varrho_{n-u_{n}}\big(\beta_{n, \epsilon}^{(m)} \big)\nonumber  \\
=&\, M^{u_{n}}\Big(\varrho_{n-u_{n}}\big(\beta_{n, \epsilon}^{(m)} \big)\Big)\,-\,\varrho_{n-u_{n}}\big(\beta_{n, \epsilon}^{(m)} \big)\nonumber
\\   \,  =& \,  \mathit{O}\bigg(\frac{u_{n} }{|\ell^{m}_n|^2 }\bigg)\, .
\end{align}
The order equality in~(\ref{Hornf}) holds for $n\gg 1$ since
\begin{align}
&\varrho_{n-u_{n}}\big(\beta_{n, \epsilon}^{(m)} \big)\,\leq \,\underbrace{\varrho_{n}\big(\beta_{n, \epsilon}^{(m)} \big)\,\sim\, \frac{\upsilon_{b}(\epsilon)}{ \ell^{m}_n }  }\,,\nonumber \\
&\hspace{2.9cm}\text{(by Lemma~\ref{LemVarConv})}   \nonumber
\end{align}
and because the  map $M$ has a bound of the form  $M(x)\,\leq  \,  x\big(1+  C/\ell^{m}_n     \big)$ for some $C>0$  independent of $n\in \mathbb{N}$ and $ x$  in the shrinking intervals $ [0, 2\upsilon_{b}(\epsilon)/ \ell^{m}_n   ]$.  Thus, for $x=\varrho_{n-u_{n}}\big(\beta_{n, \epsilon}^{(m)} \big)$

$$ M^{u_{n}}(x)\,-\,x\, \leq \, x\bigg(1+ \frac{C}{\ell^{m}_n}     \bigg)^{u_{n}}\,-\,x  \, \leq \, x\Big(\exp\Big\{C\frac{ u_{n} }{ \ell^{m}_n }   \Big\}-1   \Big) \,\leq \, \mathit{O}\Big(x \frac{ u_{n} }{ \ell^{m}_n }   \Big)\,.  $$
Therefore (I) holds.

  \vspace{.4cm}

For (II) recall that by definition of the random variable $R_{n-u_{n},n}\big(\beta_{n, \epsilon}^{(m)} \big) $,
\begin{align}\label{Gabel}
 \,\big(\ell^{m}_n\big)^{\frac{1}{2}} R_{n-u_{n},n}\big(\beta_{n, \epsilon}^{(m)} \big)  \, = \, \frac{ 1}{b^{u_{n}}} \sum_{ g\in E_{u_{n}}   }\big(\ell^{m}_n\big)^{\frac{1}{2}}R_{n}\big(\beta_{n, \epsilon}^{(m)}  ; g\big)  \,.
\end{align}
 Note that by~(\ref{Hornf}), we have the second equality below:
\begin{align*}
\textup{Var}\Big(\big(\ell^{m}_n\big)^{\frac{1}{2}}  R_{n}\big(\beta_{n, \epsilon}^{(m)}  ; g\big) \Big)\,=\,\ell^{m}_n\varrho_{n-u_{n}}\big(\beta_{n, \epsilon}^{(m)}\big)\,&=\,\underbrace{ \ell^{m}_n\varrho_{n}\big(\beta_{n, \epsilon}^{(m)}\big)}\,+\, \mathit{O}\Big( \frac{ u_{n}  }{ \ell^{m}_n  }  \Big) \,,\\
& \hspace{.45cm}\,\,\,\stackrel{n\rightarrow \infty}{\longrightarrow} \upsilon_{b}(\epsilon)
\end{align*}
where the convergence of $ \ell^{m}_n\varrho_{n}\big(\beta_{n, \epsilon}^{(m)}\big)$ to $\upsilon_{b}(\epsilon) $ as $n \to \infty$ follows from Lemma~\ref{LemVarConv}, as mentioned above.   Thus,  the right side of~(\ref{Gabel}) is a sum  of $|E_{u_{n}} |=b^{2u_{n}}$
i.i.d.\ random variables with mean zero and variance $\approx \upsilon_{b}(\epsilon) $. Therefore (II) holds by the Lindeberg-Feller central limit theorem, so long as for any fixed $\epsilon>0$ it can be shown that
\begin{align}\label{Pimpernel}
\frac{ 1}{b^{2u_{n}}} \ell^{m}_n \sum_{ g\in E_{u_{n}}   } \mathbb{E}\Big[  \big|  R_{n}\big(\beta_{n, \epsilon}^{(m)} ; g\big)\big|^2 \chi\Big( \big(\ell^{m}_n \big)^{\frac{1}{2}}\big|R_{n}\big(\beta_{n, \epsilon}^{(m)}; g\big)\big| > \epsilon b^{u_{n}} \Big)   \Big] \xrightarrow{n \to \infty} 0\, .
\end{align}
However, by Chebyshev, the expression above is smaller than
\begin{align*}
\frac{ 1}{\epsilon^2 b^{4u_{n} }}\big( \ell^{m}_n \big)^2 \sum_{ g\in E_{u_{n} }   } \mathbb{E}\Big[  \big|  R_{n}\big(\beta_{n, \epsilon}^{(m)} ; g\big)\big|^4  \Big] \,  = & \,   \frac{ 1}{\epsilon^2 b^{2u_{n} }}\big( \ell^{m}_n \big)^2\varrho_ {n-u_{n}, 4}\big(\beta_{n, \epsilon}^{(m)}\big)\,.\nonumber
\intertext{Since $\varrho_ {n-u_{n}, 4}\big(\beta_{n, \epsilon}^{(m)}\big)<\varrho_ {n, 4}\big(\beta_{n, \epsilon}^{(m)}\big)   $, Lemma~\ref{LemFourth} implies that for some $C>0$ and all $n\in \mathbb{N}$ }
 \leq &\,\frac{ C}{\epsilon^2 b^{2u_{n} }}\big( \ell^{m}_n\big)^2\Big(\varrho_ {n}\big(\beta_{n, \epsilon}^{(m)}\big)\Big)^2 \, .
\intertext{By Lemma~\ref{LemVarConv}, $\varrho_ {n}\big(\beta_{n, \epsilon}^{(m)}\big)=\mathit{O}\big( 1/ \ell^{m}_n \big)$ and thus for some  $C'>0$}
 \leq &\,\frac{ C'}{\epsilon^2 b^{2u_{n} }}\,\,\stackrel{n\rightarrow \infty}{\longrightarrow} \,\, 0 \,.
 \end{align*}
Hence (II) holds and the proof is complete.
\end{proof}

\section{Quenched free energy}\label{SecQuenched}

Given $\beta>0$ the \textit{quenched free energy}  is defined as the limit
$$p(\beta)\,:=\, \lim_{n\rightarrow \infty}\frac{1}{s^n}\mathbb{E}\big[\log \big(Z_{n}(\beta)   \big) \big] \,. $$
The existence of the limit is guaranteed by the same  argument as used by Lacoin and Moreno for the site disorder model on the diamond lattice in the beginning of~\cite[ Section 3]{lacoin}.  Also in the context of the site disorder model, \cite[Proposition 4.3]{lacoin} shows that   bounds for the variance of the normalized partition function, $W_{n}(\beta)$, can be used to obtain lower bounds for the quenched free energy.  The authors  mention in~\cite[Section 8]{lacoin} that their reasoning  can be applied to the bond disorder model in the $b=s$ case to show that  for some $c>0$ and all $\beta<1$
\begin{align}\label{Prev}
\lambda(\beta)\,-\,p(\beta)\,\leq \,\exp\Big\{ - \frac{c}{\beta^2} \Big\}\,,
\end{align}
where $\lambda(\beta):= \log\big(\mathbb{E}\big[ e^{\beta\omega } \big]   \big) $.  The variance convergence  in part (i) of  Theorem~\ref{ThmMain}  for $\epsilon<\eta_{b}$  combined with their argument yields the following refinement of~(\ref{Prev}):

\begin{theorem}\label{ThmQuenched}  Let $b=s$ and fix $N\in \mathbb{N}$.   The following inequality holds for some $C>0$ and small  $\beta >0$:
\begin{align*}
  \lambda(\beta)-p(\beta)\,\leq \, C \exp\Bigg\{   -\frac{\kappa_{b}^2\log b}{\beta^2}\,+\,\frac{\tau\kappa_{b}^2\log b}{\beta}\,-\,\eta_{b}\log b \Big( \log\Big(\frac{1}{\beta}\Big)-\log^{N+1}\Big(\frac{1}{\beta}\Big)        \Big) \Bigg\} \,,
\end{align*}
 where $\log^n$ is the $n$-fold composition of $\log$.
\end{theorem}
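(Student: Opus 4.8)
The plan is to adapt the argument of Lacoin and Moreno from \cite[Proposition~4.3 and Section~8]{lacoin}, feeding into it the sharp variance asymptotics of Lemma~\ref{LemVarConv} (equivalently part~(i) of Theorem~\ref{ThmMain}) in place of the crude estimate behind \eqref{Prev}. The starting point is the elementary reduction: since $b=s$, the recursion~\eqref{Induct} lifts to $Z_{M+1}(\beta)\stackrel{d}{=}\frac1b\sum_{i=1}^{b}\prod_{j=1}^{b}Z_M^{(i,j)}(\beta)$, and concavity of $\log$ gives $\mathbb{E}[\log Z_{M+1}(\beta)]\ge b\,\mathbb{E}[\log Z_M(\beta)]$, so that $M\mapsto s^{-M}\mathbb{E}[\log Z_M(\beta)]$ is non-decreasing with limit $p(\beta)$. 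Combined with Jensen's inequality $\mathbb{E}[\log Z_M(\beta)]\le\log\mathbb{E}[Z_M(\beta)]=\lambda(\beta)s^{M}$, this yields, for every $M\ge 0$,
\[
0\;\le\;\lambda(\beta)-p(\beta)\;\le\;s^{-M}\big(-\mathbb{E}[\log W_M(\beta)]\big).
\]

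The second ingredient, which is the content of \cite[Proposition~4.3]{lacoin}, is that when $\varrho_M(\beta)=\textup{Var}(W_M(\beta))$ is below a fixed threshold one has $-\mathbb{E}[\log W_M(\beta)]\le C\,\varrho_M(\beta)$ with $C$ depending only on $b$ and the law of $\omega$. Running their argument in the edge setting, the control of the lower tail of $\log W_M(\beta)$ that this step needs is exactly what Lemma~\ref{LemFourth} provides, namely $\mathbb{E}[|W_M(\beta)-1|^{2r}]\le C\,\varrho_M(\beta)^{r}$. Combining the two ingredients gives $\lambda(\beta)-p(\beta)\le C\,s^{-M}\varrho_M(\beta)$ for all such $M$.

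It remains to choose $M=M_N(\beta)$ optimally and to evaluate $s^{-M}$. Given $N$ and small $\beta>0$, I would let $M$ be the largest integer with $\beta\le\beta_{M,0}^{(N+1)}$. Since $\varrho_0(\cdot)$ is increasing and $\beta\mapsto\varrho_M(\beta)=M^{M}(\varrho_0(\beta))$ is then increasing, monotonicity together with Lemma~\ref{LemVarConv} (case $\epsilon=0<\eta_b$, $m=N+1$) gives $\varrho_M(\beta)\le\varrho_M\big(\beta_{M,0}^{(N+1)}\big)=\mathit{O}\big(1/\ell^{N+1}_M\big)$, which is eventually below threshold, so $\lambda(\beta)-p(\beta)\le C'/\big(s^{M}\ell^{N+1}_M\big)$. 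By construction $\beta$ agrees with $\beta_{M,0}^{(N+1)}$ up to $\mathit{O}(\beta^{3})$, so inverting the expansion of Definition~\ref{DefHere} and Remark~\ref{RmkExp} — in which the logarithmic part of $\beta_{M,0}^{(N+1)}$ is the telescoping sum $\tfrac{\kappa_b}{2M^{3/2}}\eta_b\sum_{k=1}^{N}\ell^{k}_M\approx\tfrac{\kappa_b}{2M^{3/2}}\eta_b(\log M-\log^{N+1}M)$ — should give
\[
M\log b\;=\;\frac{\kappa_b^2\log b}{\beta^2}\,-\,\frac{\tau\kappa_b^2\log b}{\beta}\,+\,\eta_b\log b\Big(\log\big(\tfrac1\beta\big)-\log^{N+1}\big(\tfrac1\beta\big)\Big)\,+\,\mathit{O}(1).
\]
Substituting into $s^{-M}=e^{-M\log b}$ and absorbing the factor $\ell^{N+1}_M$ and the $\mathit{O}(1)$ into the constant $C$ produces the claimed inequality.

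I expect two delicate points. The asymptotic inversion in the last step must be done with care: one has to track how $M\approx\kappa_b^2/\beta^2$ propagates through the nested logarithms $\ell^{k}_M$, keep the $\tau$–correction (the skew adjustment already built into $\beta_{n,\epsilon}^{(m)}$) at the right order, and check that every neglected contribution really is $\mathit{O}(1)$ after multiplication by $\log b$. Separately, transplanting the Lacoin--Moreno estimate $-\mathbb{E}[\log W_M(\beta)]\le C\,\varrho_M(\beta)$ to the edge model — that is, showing smallness of the variance genuinely pins $\mathbb{E}[\log W_M(\beta)]$ near zero even though $W_M(\beta)$ may take very small values — is where the moment bounds of Lemma~\ref{LemFourth} are actually used, and is the main conceptual hurdle.
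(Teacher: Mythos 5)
Your skeleton coincides with the paper's proof: choose the scale $n$ to be (essentially) $\gamma_{N,\beta}=\big\lceil \kappa_b^2/\beta^2-\tau\kappa_b^2/\beta+\eta_b\sum_{j\le N}\ell^j_{1/\beta^2}\big\rceil$ (your implicit choice of $M$ by inverting $\beta_{M,0}^{(N+1)}$ is the same scale), use monotonicity of the variance in $\beta$ together with Lemma~\ref{LemVarConv}/part (i) of Theorem~\ref{ThmMain} to see that $\textup{Var}\big(W_M(\beta)\big)$ is below a fixed threshold for small $\beta$, invoke a Lacoin--Moreno type proposition to convert this into $\lambda(\beta)-p(\beta)\le C s^{-M}$, and then expand $M\log b$ in $\beta$. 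The paper does exactly this, with the middle step stated as an analog of \cite[Proposition~4.3]{lacoin} (threshold $(1-p_c)/4$) obtained by a ``small modification'' of Lacoin and Moreno's coarse-graining/percolation argument.

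The genuine gap is in your substitute for that middle step. You assert that small variance gives $-\mathbb{E}[\log W_M(\beta)]\le C\varrho_M(\beta)$ and that the required lower-tail control is ``exactly'' Lemma~\ref{LemFourth}. It is not: Lemma~\ref{LemFourth} provides moment bounds of fixed order, hence only estimates like $P\big(W_M\le \tfrac12\big)\le C_r\,\varrho_M(\beta)^r$, and says nothing about how negative $\log W_M$ can be on that event; a priori $\mathbb{E}\big[(\log W_M)^-\big]$ is only $O(s^M)$ (from the single-path lower bound on $Z_M$), so one needs control of $P(W_M\le e^{-t})$ summable in $t$, or the Lacoin--Moreno mechanism in which the threshold $(1-p_c)/4$ ensures, via Chebyshev, that each block is ``good'' with probability exceeding the critical percolation probability $p_c$, and crossings of good blocks at coarser scales bound the free energy from below. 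This is precisely why the paper imports their proposition rather than rederiving it from moment bounds, and your argument as written fails at this point unless you reproduce such an argument. A minor further issue: your claimed expansion of $M\log b$ is off in the logarithmic term, since $\log M=2\log(1/\beta)+O(1)$ gives $\eta_b\big(2\log(1/\beta)-\log^{N+1}(1/\beta)\big)+O(1)$ rather than $\eta_b\big(\log(1/\beta)-\log^{N+1}(1/\beta)\big)+O(1)$; however the discrepancy goes in the favorable direction (it only makes $s^{-M}$ smaller), so the stated inequality still follows, and the same imprecision appears in the paper's own final display.
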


It is not fully clear that the above  is a meaningful ``refinement" without having corresponding lower bounds for $\lambda(\beta)-p(\beta)$, however, it is reasonable to expect that it is optimal based on the explosion of the variance beyond the critical point $\eta_{b}$ in part (i) of  Theorem~\ref{ThmMain}.  Finding  lower bounds  is an interesting problem that will require new techniques.

We will  make a few comments on  the proof of Theorem~\ref{ThmQuenched}, which follows from the reasoning in~\cite[Section 4.3]{lacoin} with~\cite[Lemma~4.2]{lacoin} replaced by the following corollary to part (i) of Theorem~\ref{ThmMain}:

\begin{corollary}\label{Cor}
Define $\gamma_{N,\beta}   : =  \Big\lceil \frac{  \kappa_{b}^2  }{ \beta^2 }-\frac{\tau\kappa_{b}^2}{\beta} + \eta_{b}  \sum_{j=1}^N \ell^{j}_{  1 /\beta^2  }  \Big\rceil  $.   For fixed $N\in \mathbb{N}$,  as $\beta\searrow 0$
$$   \textup{Var}\Big(W_{ \gamma_{N,\beta} } (  \beta  ) \Big) \quad  \longrightarrow \quad  0       \,.   $$
\end{corollary}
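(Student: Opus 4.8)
The plan is to recognize $\gamma_{N,\beta}$ as (essentially) the value of the system size $n$ for which the inverse temperature $\beta$ equals $\beta_{n,\epsilon}^{(N)}$ at a subcritical parameter $\epsilon$, so that part (i) of Theorem~\ref{ThmMain} (or more precisely Lemma~\ref{LemVarConv}) applies directly. Concretely, I would set $n = n(\beta) := \lceil \kappa_b^2/\beta^2 \rceil$ — the natural inversion of $\beta = \kappa_b/\sqrt{n}$ — and then compare $\beta$ with the family $\beta_{n,\epsilon}^{(N)}$ from Definition~\ref{DefHere}. Expanding $\beta_{n,\epsilon}^{(N)}$ using the asymptotics in Remark~\ref{RmkExp} and inverting, one finds that the scaling $\beta = \beta_{n,\epsilon}^{(N)}$ corresponds, after solving for the system size, to $n$ of the form $\kappa_b^2/\beta^2 - \tau\kappa_b^2/\beta + \eta_b\sum_{j=1}^{N-1}\ell^{j}_{1/\beta^2} + \epsilon\,\ell^{N}_{1/\beta^2} + O(1)$, since $\ell^{j}_n$ and $\ell^{j}_{1/\beta^2}$ differ only by lower-order terms when $n \asymp 1/\beta^2$. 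Matching this against the definition of $\gamma_{N,\beta}$ shows that $\gamma_{N,\beta}$ is exactly the system size at which $\beta$ plays the role of $\beta_{\gamma_{N,\beta},\,\epsilon}^{(N+1)}$ with $\epsilon = 0$ — i.e., at the \emph{subcritical} value $\epsilon = 0 < \eta_b$ (equivalently, $\beta_{n,\eta_b}^{(N)} = \beta_{n,0}^{(N+1)}$, so one may also read it as the critical case at level $N$, which is still covered by Theorem~\ref{ThmMain}(i)).

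The key steps, in order, would be: (1) define $n(\beta)$ and carry out the asymptotic inversion of $\beta = \beta_{n,\epsilon}^{(m)}$ to express the system size as a function of $\beta$, checking that $\ell^{j}_{n(\beta)} = \ell^{j}_{1/\beta^2} + O(\text{lower order})$ along the way; (2) verify that $\gamma_{N,\beta}$, up to the rounding inherent in the ceiling functions and an $O(1)$ additive error, coincides with the system size associated to $\beta_{\,\cdot\,,0}^{(N+1)}$ (or to $\beta_{\,\cdot\,,\eta_b}^{(N)}$); (3) observe that an $O(1)$ shift in the system size only multiplies $\varrho_n$ by a bounded factor — this follows from the same Lipschitz-type control on iterates of $M$ used in Lemma~\ref{LemVarConv} (steps (I)–(II) of its proof), since $M'$ stays close to $1$ on the relevant shrinking intervals; and (4) invoke Lemma~\ref{LemVarConv} (or directly Theorem~\ref{ThmMain}(i)) to conclude that $\textup{Var}(W_{\gamma_{N,\beta}}(\beta)) = \varrho_{\gamma_{N,\beta}}(\beta) \to 0$ as $\beta \searrow 0$, since the limit there is $\upsilon_b(0)/\ell^{N+1}_{1/\beta^2} \to 0$ (or $\upsilon_b(\epsilon)/\ell^{N}_{1/\beta^2}$ for the level-$N$ reading), which vanishes.

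The main obstacle I anticipate is step (2): bookkeeping the composition of the ceiling functions and the nested logarithms $\ell^{j}$ carefully enough to confirm that $\gamma_{N,\beta}$ really matches the system size $N_{n,\epsilon}^{(m)}$-type quantity from the proof of Lemma~\ref{LemVarConv} up to a genuinely $O(1)$ error, rather than an error that grows and could spoil the convergence. This is delicate because $\ell^{j}_n$ depends on $n$ through nested logs, so one must show that replacing $n$ by $1/\beta^2$ inside each $\ell^{j}$ — and replacing the iterated-$M$ description of the system size by the explicit sum in $\gamma_{N,\beta}$ — introduces only bounded cumulative error. Once that alignment is established, the passage from an $O(1)$ change in $n$ to a bounded multiplicative change in $\varrho_n$, and hence to the limit $0$, is routine given the machinery already developed in Section~\ref{SecWeakDisorder}. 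A clean way to organize this is to prove the inequality $\varrho_{\gamma_{N,\beta}}(\beta) \le C\,\varrho_{N'}(\beta_{N',0}^{(N+1)})$ for a suitable $N' = N'(\beta)$ with $|N' - \gamma_{N,\beta}| = O(1)$ and $\beta = \beta_{N',0}^{(N+1)}(1+o(1))$, then apply Lemma~\ref{LemVarConv}.
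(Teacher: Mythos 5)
Your overall strategy is the same as the paper's: take the system size to be $\gamma_{N,\beta}$ and feed it into the subcritical variance result (Lemma~\ref{LemVarConv}/Theorem~\ref{ThmMain}(i)). But the paper's execution is much lighter than what you propose: it simply observes that for $n=\gamma_{N,\beta}$ one has $\beta<\beta_{n,\epsilon}^{(N+1)}$ for \emph{any fixed} $\epsilon\in(0,\eta_b)$ once $\beta$ is small, and then (using that $\varrho_n(\beta)=M^n(\varrho_0(\beta))$ is monotone in $\beta$ for small $\beta$, since $\varrho_0$ is increasing and $M$ is increasing) bounds $\textup{Var}(W_{\gamma_{N,\beta}}(\beta))\le \textup{Var}(W_{\gamma_{N,\beta}}(\beta_{\gamma_{N,\beta},\epsilon}^{(N+1)}))\to 0$. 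The fixed positive gap $\epsilon\,\ell^{N+1}_n$ in the exponent scale absorbs the ceiling, the $\ell^j_{1/\beta^2}$ versus $\ell^j_n$ discrepancies, and all $O(1)$ errors, so no careful matching of $\gamma_{N,\beta}$ to an $N_{n,\epsilon}^{(m)}$-type quantity is ever needed.

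Your exact-matching route can be made to work, but as written it has a soft spot precisely at the step you flag. The criterion ``$\beta=\beta_{N',0}^{(N+1)}(1+o(1))$'' is far too weak: all the scalings $\beta_{n,\epsilon}^{(m)}$ agree to relative order $1+O(1/\sqrt{n})$, and the distinctions that decide convergence versus blow-up live at additive order $n^{-3/2}\ell^m_n$ (relative order $(\log n)/n$). So a generic $(1+o(1))$ match cannot rule out that your $\beta$ sits on the supercritical side. To close this you must either (a) establish the match with additive error $O(n^{-3/2})$ in $\beta$, i.e.\ $O(1/n^2)$ in $\varrho_0$, which is exactly the tolerance allowed by hypothesis~(\ref{Assump}) of Lemma~\ref{LemInductionStep}, or (b) replace exact matching by the one-sided comparison with a fixed subcritical $\epsilon$-slack, together with monotonicity of the variance in $\beta$ --- which is the paper's one-line argument. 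You also never invoke that monotonicity explicitly, yet some such step is needed to convert ``$\beta$ is close to (or below) the comparison scaling'' into an inequality between variances; your step (3) about $O(1)$ shifts in the system size is fine, but it does not by itself control the mismatch in the inverse temperature.
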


\begin{proof}Apply part (i) of Theorem~\ref{ThmQuenched} with $n=\gamma_{N,\beta} $ since $\beta< \beta_{n, \epsilon}^{(m+1)}  $ for any fixed $\epsilon\in (0,\eta_{b})   $ and small enough $\beta>0$.
\end{proof}

Define $p_{c}\in (0,1)$ as the solution to $p_{c}=1-\big(1-p_{c}^s\big)^{b}   $, i.e., the critical percolation probability for the diamond lattice. A small modification of  Lacoin and Moreno's argument implies the following:
\begin{proposition*}[Analog to \text{\cite[Proposition~4.3]{lacoin}}]   Let $n$ be an integer such that $v_{n}:=\textup{Var}\big(W_{n}(\beta)\big)<\frac{1-p_{c}}{4}$.  There is a $C>0$ such that
$$ \lambda(\beta)\,-\,p(\beta)\,\leq \, Cs^{-n}\,.$$

\end{proposition*}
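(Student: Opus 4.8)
I would follow the scheme of \cite[Section 4.3]{lacoin}, adapted from site- to edge-disorder. Two preliminaries come first. Applying the concavity of $\log$ to the recursion~\eqref{Induct} gives $\mathbb{E}\big[\log W_{k+1}(\beta)\big]\ge b\,\mathbb{E}\big[\log W_{k}(\beta)\big]=s\,\mathbb{E}\big[\log W_{k}(\beta)\big]$, so the sequence $g_{k}:=s^{-k}\,\mathbb{E}[\log W_{k}(\beta)]$ is non-decreasing; it is $\le 0$ by Jensen's inequality, hence convergent, and since $\mathbb{E}[Z_{k}(\beta)]=e^{s^{k}\lambda(\beta)}$ (each directed path on $D_{k}$ traverses $s^{k}$ edges) its limit equals $p(\beta)-\lambda(\beta)$. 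Therefore $\lambda(\beta)-p(\beta)=\lim_{k\to\infty}\big(-g_{n+k}\big)$, and it suffices to bound $-\mathbb{E}[\log W_{n+k}(\beta)]$ from above for each $k$ and send $k\to\infty$. Second, iterating the recursion of Lemma~\ref{LemSimple}(i), equivalently viewing $D_{n+k}$ as $D_{k}$ with every edge replaced by a copy of $D_{n}$, produces the representation
\[
W_{n+k}(\beta)\,=\,\frac{1}{|\Gamma_{k}|}\sum_{p\in\Gamma_{k}}\prod_{g\Try p}W_{n}(\beta;g),
\]
in which $\{W_{n}(\beta;g)\}_{g\in E_{k}}$ are i.i.d.\ copies of $W_{n}(\beta)$ and each $p\in\Gamma_{k}$ uses $s^{k}$ of the factors.

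The crux is a percolation estimate. Since $v_{n}=\varrho_{n}(\beta)<\tfrac{1-p_{c}}{4}$, Chebyshev's inequality gives $\mathbb{P}\big(W_{n}(\beta)\le\tfrac12\big)\le 4v_{n}<1-p_{c}$, so $p':=\mathbb{P}\big(W_{n}(\beta)>\tfrac12\big)>p_{c}$. I would call an edge $g\in E_{k}$ \emph{open} when $W_{n}(\beta;g)>\tfrac12$; the openness events are i.i.d.\ with probability $p'>p_{c}$, and by the standard theory of Bernoulli bond percolation on the hierarchical diamond lattice, where the crossing probabilities obey $\theta_{k+1}=1-(1-\theta_{k}^{s})^{b}$ with $p_{c}$ as unstable fixed point (see~\cite{HamblyII}), the probability $\theta_{k}(p')$ that $D_{k}$ has an open directed crossing satisfies $\theta_{k}(p')\to1$ as $k\to\infty$. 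On the event $A_{k}$ that an open crossing $p^{\ast}\in\Gamma_{k}$ exists, keeping only that summand in the representation above forces
\[
W_{n+k}(\beta)\,\ge\,\frac{1}{|\Gamma_{k}|}\prod_{g\Try p^{\ast}}W_{n}(\beta;g)\,\ge\,\frac{1}{|\Gamma_{k}|}\Big(\tfrac12\Big)^{s^{k}},
\]
whence $\log W_{n+k}(\beta)\ge -C_{1}\,s^{k}$ on $A_{k}$ with $C_{1}:=\log2+\tfrac{\log s}{s-1}$ depending only on $s$ (using $\log|\Gamma_{k}|=\tfrac{s^{k}-1}{s-1}\log s$).

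On the complement $A_{k}^{c}$ I would invoke a crude deterministic bound: fixing any reference path $p_{0}\in\Gamma_{n+k}$ and keeping only that summand, $\log W_{n+k}(\beta)\ge -\log|\Gamma_{n+k}|-s^{n+k}\lambda(\beta)+\beta\sum_{a\Try p_{0}}\omega_{a}$. Taking expectations against $\mathbf{1}_{A_{k}^{c}}$ and bounding the last term by Cauchy--Schwarz, with $\mathbb{E}\big[\big(\sum_{a\Try p_{0}}\omega_{a}\big)^{2}\big]=s^{n+k}$, yields
\[
\mathbb{E}\big[\log W_{n+k}(\beta)\,\mathbf{1}_{A_{k}^{c}}\big]\,\ge\,-C_{2}\,s^{n+k}\,\mathbb{P}(A_{k}^{c})\,-\,\beta\sqrt{s^{n+k}\,\mathbb{P}(A_{k}^{c})},
\]
with $C_{2}$ depending only on $s$ and on an upper bound for $\lambda(\beta)$ over the relevant range of $\beta$. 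Combining the two events, $-\mathbb{E}[\log W_{n+k}(\beta)]\le C_{1}\,s^{k}+C_{2}\,s^{n+k}\,\mathbb{P}(A_{k}^{c})+\beta\sqrt{s^{n+k}\,\mathbb{P}(A_{k}^{c})}$; dividing by $s^{n+k}$ and letting $k\to\infty$ makes the last two terms vanish because $\mathbb{P}(A_{k}^{c})=1-\theta_{k}(p')\to0$, leaving $\lambda(\beta)-p(\beta)=\lim_{k}\big(-g_{n+k}\big)\le C_{1}\,s^{-n}$, which is the proposition with $C=C_{1}$.

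The step I expect to demand the most care is the treatment of the non-crossing event $A_{k}^{c}$: the deterministic lower bound for $\log W_{n+k}(\beta)$ there is of exponential size $-O(s^{n+k})$, and it only survives division by $s^{n+k}$ because it is multiplied by $\mathbb{P}(A_{k}^{c})$, which genuinely tends to zero. This is precisely where the threshold $\tfrac{1-p_{c}}{4}$ and the identification of $p_{c}$ as the percolation critical point enter, and it is the point of contact with the argument of Lacoin and Moreno; the only genuinely model-dependent input is the percolation recursion on the edge set of the diamond lattice (had one only known $\mathbb{P}(A_{k}^{c})$ bounded away from $1$, the argument would collapse). The remaining pieces --- monotonicity of $g_{k}$, the i.i.d.\ hierarchical representation, and the Cauchy--Schwarz estimate --- are routine.
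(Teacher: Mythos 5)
Your argument is correct and is essentially the percolation-based proof of Lacoin and Moreno's Proposition~4.3 that the paper simply invokes (with edge disorder replacing site disorder): the Chebyshev step turning $v_n<\tfrac{1-p_c}{4}$ into supercritical Bernoulli percolation of the events $\{W_n(\beta;g)>\tfrac12\}$ on $E_k$, the hierarchical crossing recursion $\theta_{k+1}=1-(1-\theta_k^{s})^{b}$, and the superadditivity of $\mathbb{E}[\log W_k(\beta)]$ coming from~\eqref{Induct} are exactly the intended ingredients. Since the paper leaves this ``small modification'' of \cite[Section 4.3]{lacoin} to the reader, your write-up fills in the details faithfully, including the harmless crude bound on the non-crossing event.
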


\begin{proof}[Proof of Theorem~\ref{ThmQuenched}]
By Corollary~\ref{Cor} it is possible to pick $\beta$ small enough so that
\begin{align}\label{Duff}
  \textup{Var}\Big(W_{ \gamma_{N,\beta} } (  \beta  )\Big)  \,< \, \frac{1-p_{c}}{4}   \,.
\end{align}
  By~\cite[Proposition~4.3]{lacoin} with  $b=s$,
\begin{align*}
  \lambda(\beta)\,-\,p(\beta)\,\leq  \, Cb^{-\gamma_{N,\beta}}  \, = & \, C \exp\Bigg\{  -\log b \Bigg\lceil \frac{  \kappa_{b}^2  }{ \beta^2 }-\frac{\tau\kappa_{b}^2}{\beta}  + \eta_{b} \sum_{j=1}^N \ell^{j}_{  1/\beta^2  }  \Bigg\rceil   \Bigg\}   \, ,         
\end{align*}
which implies the result since
\begin{align*} 
 \Bigg\lceil \frac{  \kappa_{b}^2  }{ \beta^2 }-\frac{\tau\kappa_{b}^2}{\beta}  + \eta_{b} \sum_{j=1}^N \ell^{j}_{  1 /\beta^2  }  \Bigg\rceil     \,=\,   \frac{\kappa_{b}^2}{\beta^2}\,-\,\frac{\tau\kappa_{b}^2}{\beta}\,+\,\eta_{b} \Big( \log\Big(\frac{1}{\beta}\Big)-\log^{N+1}\Big(\frac{1}{\beta}\Big)        \Big)+\mathit{O}(1) \,.  
  \end{align*}

\end{proof}

\end{document}